\documentclass[12pt, oneside]{amsart}

\usepackage{xcolor}
\usepackage{hyperref}
\hypersetup{%
	pdfpagemode={UseOutlines}, bookmarksopen, pdfstartview={FitH},
	colorlinks, linkcolor={blue}, citecolor={blue}, urlcolor={blue}
}
\usepackage{geometry}
\usepackage{amsmath, amssymb, amsthm, mathtools,amsfonts,amssymb}
\usepackage[numbers]{natbib}

\numberwithin{equation}{section}

\newtheorem{theorem}{Theorem}[section]
\newtheorem{definition}[theorem]{Definition}

\newtheorem{corollary}[theorem]{Corollary}
\newtheorem{proposition}[theorem]{Proposition}

\newtheorem{example}[theorem]{Example}
\theoremstyle{remark}
\newtheorem{remark}[theorem]{Remark}

\newcommand{\R}{\mathbb{R}}
\newcommand{\rd}{\mathbb{R}^d}
\newcommand{\rdd}{{\mathbb{R}^{2d}}}

\newcommand{\cSrd}{\mathcal{S}(\mathbb{R}^{d})}
\newcommand{\cSprd}{\mathcal{S}'(\mathbb{R}^{d})}
\newcommand{\cSprdd}{\mathcal{S}'(\mathbb{R}^{2d})}
\newcommand{\srd}{\mathcal{S}(\mathbb{R}^d)}

\newcommand{\ltrd}{L^2(\mathbb{R}^d)}
\newcommand{\ltrdd}{L^2(\mathbb{R}^{2d})}
\newcommand{\tct}{tame canonical transformation }
\DeclareMathOperator{\conesupp}{conesupp}

\title[]{A Phase Space Criterion for Dynamical Amrein--Berthier
Uncertainty}

\author{Piero D'Ancona}
\address[Piero D'Ancona]{Dipartimento di Matematica,
Sapienza Universit\`a di Roma, Piazzale A. Moro 2, 00185 Roma,
Italy}
\email{dancona@mat.uniroma1.it}

\author{Diego Fiorletta}
\address[Diego Fiorletta]{Dipartimento di Matematica,
Sapienza Universit\`a di Roma, Piazzale A. Moro 2, 00185 Roma,
Italy}
\email{diego.fiorletta@uniroma1.it}

\author{Fabio Nicola}
\address[Fabio Nicola]{Dipartimento di Scienze Matematiche,
Politecnico di Torino, Corso Duca degli Abruzzi 24, 10129 Torino,
Italy}
\email{fabio.nicola@polito.it}

\begin{document}

\keywords{Amrein--Berthier uncertainty principle,
Schr\"{o}dinger equation, Fourier integral operators, Gabor wave
front set, canonical transformations, localized propagators}
\subjclass[2020]{35Q41, 35S30, 47G30, 42B37, 81Q20}

\begin{abstract}
  We prove a phase space criterion for dynamical Amrein--Berthier
  uncertainty principles. The abstract result says that, for a
  Fourier integral operator $A\in FIO(\chi)$ associated with a
  tame canonical transformation $\chi$, the localized operator
  $\mathbf{1}_E A\mathbf{1}_F$ is compact on $L^2(\mathbb {R}^d)$
  whenever $\chi$ satisfies a vertical non refocusing condition:
  high frequency covectors issued from a spatially localized
  region cannot return to a vertical direction over the
  observation region. In the linear symplectic case this
  condition is equivalent to the familiar nondegeneracy $\det
  B\neq0$ of the upper right block of the symplectic matrix. We
  apply this compactness theorem to Schr\"{o}dinger propagators
  for Yajima--type Hamiltonians, including quadratic electric and
  linear magnetic growth, and obtain two--time Amrein--Berthier
  inequalities for compact localization sets at all nonrefocusing
  times. The result extends the compactness mechanism behind the
  dynamical Amrein--Berthier principle to a genuinely microlocal
  setting.
\end{abstract}

\maketitle

\section{Introduction}

\emph{Dynamical uncertainty principles} ask how strongly a solution 
of an evolution equation can be localized at two different times.
For Schr\"{o}dinger evolutions this question has been developed
in several directions.  Hardy and Morgan type principles form one
large strand; see, for instance, the survey
\cite{Fernandez-BertolinMalinnikova21-a}.  A second
strand concerns metaplectic and
time-frequency uncertainty principles
\cite{GroechenigShafkulovska24-a}, Hardy uncertainty for
quadratic Hamiltonians
\cite{CorderoGiacchiMalinnikova24-a}, and recent dynamical
restriction phenomena \cite{nicola2025}.  In all these problems
the propagator replaces the Fourier transform, and the geometry
of the Hamiltonian flow begins to play the role usually played by
frequency space.

The classical theorem of Amrein and Berthier
\cite{AmreinBerthier77-a}, together with Benedicks' qualitative
theorem \cite{Benedicks85-a} and the quantitative refinements of
Nazarov and Jaming \cite{Nazarov93-a,Jaming07-a}, says that a
nonzero $L^2$ function and its Fourier transform cannot both be
localized on sets of finite measure.  This is different from the
familiar Heisenberg or Hardy principles, in that the obstruction
is not the size of moments, Gaussian decay, or exponential
weights, but the non concentration of all the mass in two
``small'' regions, one in physical and one in frequency space.

In the corresponding dynamical problem the Fourier transform is
replaced by a Schr\"{o}dinger propagator $e^{-iTH}$. For the
free particle this replacement is essentially cosmetic, indeed,
at nonzero times the propagator is essentially
a rescaled Fourier transform,
up to elementary unitary factors. For a Hamiltonian
with nontrivial dynamics this identity is lost, and the
geometry of the Hamiltonian flow enters the question. The purpose
of this paper is to isolate a transparent geometric condition on
the flow which implies the compactness needed for a dynamical
Amrein--Berthier inequality.

The mechanism comes from \cite{DanconaFiorletta2026}. Given a
suitable selfadjoint $H$ and two sets $E,F$, if the localized
propagator $L=\mathbf{1}_F e^{-iTH}\mathbf{1}_E$ is compact and 
the equality $\lVert L \phi \rVert_{L^2}= \lVert \phi \rVert_{L^2}$ is ruled out
(e.g.~by unique continuation),
then one obtains a two--time estimate of the form
\begin{equation*}
  \|u(t)\|_{L^2}
  \leq
  C\left(
  \|u(0)\|_{L^2(E^c)}
  +
  \|u(T)\|_{L^2(F^c)}
  \right).
\end{equation*}
Thus the problem separates into two parts: a compactness theorem
for localized dynamics and a unique continuation argument
excluding compact support at two different times. The second part 
is classical in spirit and, in our setting, follows from local
unique continuation for magnetic Schr\"{o}dinger operators. The
novelty here is the first part. We prove compactness by using the
Gabor wave front set and the global phase space description of
Fourier integral operators developed by Tataru and by
Cordero--Gr\"{o}chenig--Nicola--Rodino
\cite{tataru2004,CGNR2013,CGNR2014}.

Let $A\in FIO(\chi)$, where $\chi$ is a tame canonical
transformation of $\mathbb{R}^{2d}$ (see Section
\ref{sec:tamecan} for definitions). 
Spatial cutoffs have Gabor wave front set
contained in the vertical cone
$\{0\}\times(\mathbb{R}^d\setminus\{0\})$. We make a basic
geometric assumption that we call
the \emph{vertical nonrefocusing condition}:
high frequency points $(0,\eta)$ in the vertical cone are not
mapped by $\chi$ back into an arbitrarily narrow vertical cone.
Under this condition, Theorem \ref{the:compact-FIO} proves that
\begin{equation*}
  \mathbf{1}_E A\mathbf{1}_F:L^2(\mathbb{R}^d)\to
  L^2(\mathbb{R}^d)
\end{equation*}
is compact for all compact sets $E,F$. The proof uses two
elementary but powerful facts: spatial localization creates only
vertical Gabor singularities, while an operator in $FIO(\chi)$
transports phase space singularities by $\chi$. Vertical
nonrefocusing prevents those singularities from returning to the
vertical cone after the propagation. The localized operator
therefore maps tempered distributions into Schwartz functions,
and compactness follows from a weighted Sobolev embedding. The
condition is also invariant under inversion of the canonical
transformation and stable under sublinear perturbations at
infinity.

The linear symplectic case gives the model test. If
\begin{equation*}
  \chi=
  \begin{pmatrix}
  A & B\\
  C & D
  \end{pmatrix}
  \in Sp(d,\mathbb{R}),
\end{equation*}
then vertical nonrefocusing is equivalent to $\det B\neq0$. This
recovers the familiar nondegeneracy behind the Fourier transform,
the free Schr\"{o}dinger propagator, and nonexceptional times for
quadratic Hamiltonians. In this sense, the condition can be
regarded as the phase space form of
the same nondegeneracy that already governs the elementary
examples.

After proving the abstract compactness theorem, we apply it to
Schr\"{o}dinger propagators associated with Yajima--type
Hamiltonians \cite{yajima1991} (see Section \ref{sec:compactness}).
These include magnetic potentials
with linear growth and electric potentials with quadratic growth,
a borderline regime not covered by the strictly
sublinear/subquadratic assumptions in
\cite{DanconaFiorletta2026}. The propagator belongs to the class
$FIO (\chi_{t,s})$, where $\chi_{t,s}$ is the associated
Hamiltonian flow,
and therefore compactness holds at all times
$(t,s)$ for which the Hamiltonian flow satisfies vertical
nonrefocusing. For quadratic Hamiltonians this reduces to the
condition $\det B_{t-s}\neq0$; for perturbations whose force is
sublinear at infinity, the stability of the condition gives the
same set of nonrefocusing times.

This gives a useful way to view the borderline examples. Linear
magnetic growth and quadratic electric growth are not small from
the point of view of the weighted Sobolev argument of
\cite{DanconaFiorletta2026}, but they are natural for the global
FIO calculus: their flows are tame canonical transformations. In
the quadratic case the exceptional times are exactly the
refocusing times of the linear symplectic flow; outside that
discrete set the dynamical Amrein--Berthier inequality follows
from the same compactness mechanism.

The final step is the dynamical Amrein--Berthier inequality,
Theorem \ref{the:dynAB}. Once compactness is known, the proof
follows the strategy of \cite{DanconaFiorletta2026}: if the norm
of the relevant localized operator were equal to 1, 
compactness would
produce a finite dimensional space of functions localized at two
times. Finite propagation for the associated wave equation,
Burnside's theorem, and unique continuation for magnetic
Schr\"{o}dinger operators then force such a function to vanish.
This contradiction gives the strict norm inequality and hence the
uncertainty estimate.  In this sense, our contribution is to
isolate, in the Amrein--Berthier setting, the geometric
compactness mechanism behind a large class of examples.  The
relevant condition is not the explicit form of the propagator,
but the absence of vertical refocusing for the associated
canonical transformation.  This gives a uniform route to
localized compactness and extends the dynamical
Amrein--Berthier inequality of \cite{DanconaFiorletta2026} to a
regime naturally governed by Fourier integral operator calculus.


The paper is organized as follows. Section \ref{sec:phase-space}
recalls the phase space tools, Gabor wave front set, and global
Fourier integral operator classes used in the proof. Section
\ref{sec:compact-FIO} proves the abstract localized compactness
theorem under vertical nonrefocusing. Section
\ref{sec:compactness} applies this result to Schr\"{o}dinger
propagators of Yajima--type and records the quadratic and
perturbed quadratic examples. Section \ref{sec:main-results}
proves the dynamical Amrein--Berthier inequality and the related
observability consequence.

\subsection*{Acknowledgments}

F.~N.~ is a Fellow of the {\em Accademia delle Scienze di Torino}
and a member of the {\em Societ\`a Italiana di Scienze e
Tecnologie Quantistiche (SISTEQ)}.

P.~D'A and D.~F.~are partially supported 
by the Progetto Ricerca Scientifica 2024
``Wave dynamics in heterogeneous media'' of Sapienza University,
and by the Gruppo Nazionale per l'Analisi Matematica, 
la Probabilit\`{a} e le loro Applicazioni (GNAMPA). D.F. is also partially supported by the Progetto Avvio alla
Ricerca 2025 ``Indeterminazione dinamica e sue applicazioni'' of Sapienza University. %

\section{Phase space tools}\label{sec:phase-space}

\subsection{Notation}

We denote by $\cSrd$ the space of Schwartz functions in $\rd$ and
by $\cSprd$ that of temperate distributions. We write $\langle
\cdot,\cdot\rangle$ for the inner product (linear in the first
argument) in $L^2(\rd)$ and $\|\cdot\|$ for the corresponding
norm. The Fourier transform of a function $f$ is normalized as
\begin{equation*}
  \mathcal{F}f(\xi)=\widehat{f}(\xi)=
  \int_{\rd} e^{-ix\cdot\xi} f(y)\, dy.
\end{equation*}
If $\Gamma$ and $\Gamma'$ are two open conic subsets of
$\rdd\setminus\{0\}$, we write $\Gamma\Subset \Gamma'$ if
$\overline{\Gamma}\cap\mathbb{S}^{2d-1}$ is a compact subset of
$\Gamma'\cap\mathbb{S}^{2d-1}$, where $\mathbb{S}^{2d-1}$ is the
unit sphere in $\rdd$ centered at the origin.

\subsection{The short-time Fourier
transform}(\cite{grochenig2001})

For $x,\xi\in\rd$ the so--called phase space shift $\pi(x,\xi)$
is the unitary operator in $\ltrd$ given by
\begin{equation*}
  \pi(x,\xi) g(y)=e^{iy\cdot\xi} g(y-x),\qquad g\in L^2(\rd).
\end{equation*}
We next define a type of localized Fourier transform. As we will
see, it provides a tool to measure the regularity and decay of a
temperate distribution.

\begin{definition}
  For $g\in\srd\setminus\{0\}$, $f\in\cSprd$ the \emph{short time
  Fourier transform (STFT)} of $f$ with \emph{window} $g$ is the
  function
  \begin{equation*}
    V_g f(z):=\langle f,\pi(z)g\rangle\qquad z\in\rdd.
  \end{equation*}
\end{definition}

One sees that if $g\in \ltrd$ and $\|g\|=(2\pi)^{-d/2}$ then
$V_g:\ltrd\to\ltrdd$ is an isometry, that is
\begin{equation*}
  V_g^\ast V_g=I.
\end{equation*}
Explicitly, for $F$ in a dense class, say
$F\in\mathcal{S}(\mathbb{R}^{2d})$, the adjoint is the synthesis
operator
\begin{equation*}
  (V_g^\ast F)(y)
  =
  \int_{\mathbb{R}^{2d}} F(x,\xi)e^{iy\cdot \xi}g(y-x)\,dx\,d\xi
  .
\end{equation*}
This inversion formula extends to distributions in $\cSprd$.

In the following, we will also need the following result (see,
e.g., \cite[Theorem 11.2.3]{grochenig2001}).

\begin{proposition}\label{pro:growth}
  If $g\in\srd\setminus\{0\}$ and $f\in\cSprd$, the function $V_g
  f$ is continuous, with a growth at most polynomial, that is,
  there exist $C,N>0$ such that
  \begin{equation*}
    |V_g f(z)|\leq C (1+|z|)^N\quad \forall z\in\rdd.
  \end{equation*}
\end{proposition}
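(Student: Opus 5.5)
The plan is to combine the continuity of $(f,\phi)\mapsto\langle f,\phi\rangle$ on $\cSprd\times\cSrd$ with the polynomial-seminorm description of the topology of $\cSrd$. Continuity of $V_gf$ comes first and is routine. The map $z\mapsto \pi(z)g$ is continuous from $\rdd$ into $\cSrd$. Indeed, for $z_n\to z$ the functions $e^{iy\cdot\xi_n}g(y-x_n)$ converge to $e^{iy\cdot\xi}g(y-x)$ in every seminorm $\sup_y |y^\alpha\partial^\beta(\cdot)|$. Derivatives bring down polynomial factors in $\xi$ and derivatives of $g$, and the convergence is uniform thanks to the rapid decay of $g$ and the mean value theorem applied to $g$ and its derivatives. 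Since $f$ is a continuous linear functional on $\cSrd$ (and the pairing is taken antilinear/linear consistently with the inner product convention), $V_gf(z)=\langle f,\pi(z)g\rangle$ is continuous in $z$.

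For the growth bound, I will use the defining property of tempered distributions. There exist $C_0>0$ and $N_0\in\mathbb{N}$ such that
\begin{equation*}
  |\langle f,\phi\rangle|\le C_0\sum_{|\alpha|,|\beta|\le N_0}\sup_{y\in\rd}|y^\alpha\partial^\beta\phi(y)|
  \qquad\text{for all }\phi\in\cSrd .
\end{equation*}
It therefore suffices to estimate each seminorm of $\phi=\pi(x,\xi)g$ polynomially in $(x,\xi)$. By Leibniz,
\begin{equation*}
  \partial^\beta\bigl(e^{iy\cdot\xi}g(y-x)\bigr)=\sum_{\gamma\le\beta}\binom{\beta}{\gamma}(i\xi)^{\beta-\gamma}e^{iy\cdot\xi}(\partial^\gamma g)(y-x),
\end{equation*}
so $|\partial^\beta\phi(y)|\lesssim(1+|\xi|)^{|\beta|}\sum_{\gamma\le\beta}|\partial^\gamma g(y-x)|$. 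For the weight I write $y=(y-x)+x$ and use $|y^\alpha|\le (1+|y|)^{|\alpha|}\le (1+|y-x|)^{|\alpha|}(1+|x|)^{|\alpha|}$ (Peetre's inequality). This gives
\begin{equation*}
  \sup_y|y^\alpha\partial^\beta\phi(y)|\lesssim (1+|x|)^{|\alpha|}(1+|\xi|)^{|\beta|}\sum_{\gamma\le\beta}\sup_w(1+|w|)^{|\alpha|}|\partial^\gamma g(w)|,
\end{equation*}
and the last supremum is finite because $g\in\cSrd$.

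Summing over $|\alpha|,|\beta|\le N_0$ and using $(1+|x|)(1+|\xi|)\le (1+|z|)^2$ yields $|V_gf(z)|\le C(1+|z|)^{N}$ with $N=2N_0$. The only step requiring any care is the translation estimate, i.e.\ the Peetre-type inequality that moves the polynomial weight from $y$ onto $x$ at polynomial cost. Everything else is bookkeeping with the Leibniz rule.
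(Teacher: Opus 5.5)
Your proof is correct. The paper gives no proof of its own and simply cites \cite[Theorem 11.2.3]{grochenig2001}. That reference uses the same standard argument you give: continuity of $z\mapsto\pi(z)g$ into $\cSrd$ for continuity of $V_gf$, and the finite-order seminorm bound for $f\in\cSprd$, combined with the Leibniz rule and the translation estimate $(1+|y|)\le(1+|y-x|)(1+|x|)$, for the polynomial growth.
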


\subsection{Global wave-front
set}(\cite{hormander1991,rodino2014})

We recall some facts concerning the so--called global (alias
Gabor, or homogeneous) wave front set of a temperate distribution
$f$, which, roughly speaking, encodes the lack of regularity or
decay of $f$.

\begin{definition}(\cite{hormander1991,rodino2014})
  With every temperate distribution $f\in \cSprd$ is associated a
  subset $WF_G(f)\subset\rdd\setminus\{0\}$ defined as the
  complement in $\rdd\setminus\{0\}$ of the set of points $z_0\in
  \rdd\setminus\{0\}$ such that there exists an open conic
  neighborhood $\Gamma\subset\rdd\setminus\{0\}$ of $z_0$ such
  that $V_g f$ has rapid decay in $\Gamma$, that is, for every
  $N\geq0$ there exists $C_N>0$ such that
  \begin{equation*}
    |V_g f(z)|\leq C_N (1+|z|)^{-N}\quad \forall z\in\Gamma.
  \end{equation*}
\end{definition}

We can show that this definition does not depend on the choice of
the window $g\in\cSrd\setminus\{0\}$. Moreover $WF_G(f)$ is a
closed conic subset of $\rdd\setminus\{0\}$.

One can see that, if $f \in \mathcal{S}'(\mathbb{R}^d)$, then
\begin{equation}\label{eq:WFS}
  f \in \mathcal{S}(\mathbb{R}^d) \iff WF_G(f) = \emptyset.
\end{equation}

\subsection{Symbol classes and microlocality}(\cite{rodino2014})

We use the notation $ S^0_{0,0}(\mathbb{R}^{2d})$ for the class
of smooth functions $a\in C^\infty(\rdd)$ such that
\begin{equation*}
  \partial^\alpha a \in L^\infty(\rdd) \quad
  \forall\alpha\in\mathbb{N}^d.
\end{equation*}

\begin{definition}
  For $a\in S^0_{0,0}(\rdd)$, the corresponding conic support
  $\conesupp a$ is the complement in
  $\mathbb{R}^{2d}\setminus\{0\}$ of the set of points $z_0 \in
  \mathbb{R}^{2d}\setminus\{0\}$ such that there exists an open
  conic neighborhood $\Gamma \subset
  \mathbb{R}^{2d}\setminus\{0\}$ of $z_0$ satisfying
  \begin{equation*}
    |a(z)| \le C_N (1+|z|)^{-N} \qquad \forall z \in \Gamma.
  \end{equation*}
\end{definition}

\begin{example}\label{exa:ex1}
  As an elementary ---yet important--- example, we observe that
  if $\varphi \in \mathcal{S}(\mathbb{R}^d)$, regarded as a
  function in $S^0_{0,0}(\mathbb{R}^{2d})$ independent of $\xi$,
  we obtain
  \begin{equation*}
    \conesupp \varphi = \{0\} \times (\mathbb{R}^d \setminus
    \{0\}).
  \end{equation*}
\end{example}

For a given symbol $a\in\cSprdd$, we consider the corresponding
Weyl quantization $a^w$, as a linear continuous operator
$\cSrd\to\cSprd$ defined formally as
\begin{equation*} a^w f(x)=(2\pi)^{-d}\int_\rdd e^{i
  (x-y)\cdot\xi}a\Big(\frac{x+y}{2},\xi\Big)f(y)\, dy\, d\xi.
\end{equation*}
If $a\in S^0_{0,0}$ the operator $a^w$ extends to a linear
continuous map in $\cSrd$, $\cSprd$, and $\ltrd$.

We also recall that the operators $a^w$, with $a \in
S^0_{0,0}(\mathbb{R}^{2d})$, are globally microlocal, in the
following sense.

\begin{proposition}\label{pro:micolocality}
  We have
\begin{equation*}
  a \in S^0_{0,0}(\mathbb{R}^{2d}),\ f \in
  \mathcal{S}'(\mathbb{R}^d)
  \;\Rightarrow\;
  WF_G(a^w f) \subset \conesupp a \cap WF_G(f).
  \end{equation*}
\end{proposition}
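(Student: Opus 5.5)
The plan is to work entirely on the STFT side with a Gaussian window $g(y)=c\,e^{-|y|^2/2}$, normalized so that $V_g^\ast V_g=I$; since $WF_G$ does not depend on the window, this choice costs nothing. Writing $f=V_g^\ast V_g f$ (valid on $\cSprd$), we get the integral representation
\begin{equation*}
  V_g(a^w f)(z)=\int_{\rdd} K(z,w)\,V_g f(w)\,dw,\qquad K(z,w)=\langle a^w\pi(w)g,\pi(z)g\rangle .
\end{equation*}
Using the Weyl correspondence, $K(z,w)=\langle a, W(\pi(z)g,\pi(w)g)\rangle$ up to a constant, where $W$ is the cross-Wigner distribution. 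For Gaussian $g$ this is explicit: $W(\pi(z)g,\pi(w)g)(\zeta)$ has modulus $C e^{-c|\zeta-\frac{z+w}{2}|^2}e^{-c|z-w|^2}$. This yields the key kernel estimate
\begin{equation*}
  |K(z,w)|\le C e^{-c|z-w|^2}\int_{\rdd}|a(\zeta)|\,e^{-c|\zeta-\frac{z+w}{2}|^2}\,d\zeta .
\end{equation*}
I would verify this identity carefully, since it is the only computation in the argument. Note that only the boundedness of $a$ is used here, and smoothness plays no role.

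\emph{Inclusion in $WF_G(f)$.} Let $z_0\notin WF_G(f)$, with $V_gf$ rapidly decreasing on an open cone $\Gamma'\ni z_0$, and pick $\Gamma\Subset\Gamma'$ containing $z_0$. Since $|a|\le C$, we have $|K(z,w)|\le Ce^{-c|z-w|^2}$. For $z\in\Gamma$, split the $w$-integral into two regions.
\begin{itemize}
\item[(i)] On $w\in\Gamma'$, use the rapid decay of $V_gf$. Here $(1+|w|)^{-N}\le C(1+|z|)^{-N}(1+|z-w|)^{N}$, and the Gaussian absorbs the last factor.
\item[(ii)] On $w\notin\Gamma'$, there is $\delta>0$ with $|z-w|\ge\delta\max(|z|,|w|)$ by conic geometry, since $\Gamma\Subset\Gamma'$. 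Proposition \ref{pro:growth} bounds $|V_gf(w)|\le C(1+|w|)^{M}$, and $e^{-c|z-w|^2}$ then dominates any power of $(1+|z|)(1+|w|)$.
\end{itemize}
Both pieces are $O((1+|z|)^{-N})$ on $\Gamma$, so $z_0\notin WF_G(a^wf)$.

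\emph{Inclusion in $\conesupp a$.} Let $z_0\notin\conesupp a$, with $|a|\le C_N(1+|\zeta|)^{-N}$ on an open cone $\Gamma'\ni z_0$, and pick $\Gamma\Subset\Gamma''\Subset\Gamma'$ around $z_0$. For $z\in\Gamma$, again split the $w$-integral.
\begin{itemize}
\item[(i)] If $|z-w|\le\varepsilon|z|$ with $\varepsilon$ small, then the midpoint $m=\frac{z+w}{2}$ lies in $\Gamma''$ and $|m|\sim|z|$. Split the $\zeta$-integral in the kernel bound according to whether $|\zeta-m|\le\varepsilon|m|$, in which case $\zeta\in\Gamma'$ and $|a(\zeta)|\le C_N(1+|z|)^{-N}$, or not, in which case $e^{-c|\zeta-m|^2}\le e^{-c'|z|^2}$ and boundedness of $a$ suffices. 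Hence $|K(z,w)|\le C_N(1+|z|)^{-N}e^{-c|z-w|^2}$ there. Multiplying by $|V_gf(w)|\le C(1+|w|)^M\le C'(1+|z|)^M$ and integrating gives rapid decay in $z$.
\item[(ii)] If $|z-w|>\varepsilon|z|$, then $|z-w|\gtrsim|z|$. Combining this with the Gaussian factor and the polynomial growth of $V_gf$ gives rapid decay exactly as in the first part, after also splitting $|w|\le 2|z|$ versus $|w|>2|z|$.
\end{itemize}
Hence $z_0\notin WF_G(a^wf)$. Together with the first part, this proves $WF_G(a^w f)\subset\conesupp a\cap WF_G(f)$.

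I expect the main obstacle to be the precise kernel bound, that is, justifying $K(z,w)=\langle a,W(\pi(z)g,\pi(w)g)\rangle$ for $f\in\cSprd$ and the exact Gaussian form of the cross-Wigner modulus. The integral representation also needs care: it holds by duality since $a^w$ maps $\cSrd$ to itself, so $\langle a^wf,\pi(z)g\rangle=\langle f,(\bar a)^w\pi(z)g\rangle$, to which the inversion formula applies. If the Gaussian route became cumbersome, the fallback is the standard almost-diagonalization $|K(z,w)|\le C_N(1+|z-w|)^{-N}$, proved by integration by parts using the bounded derivatives of $a$. In the second inclusion one would then also need rapid decay of derivatives of $a$ in a slightly smaller cone, obtained by interpolation on unit balls, which stay inside the cone for large $|\zeta|$.
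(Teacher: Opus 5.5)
\textbf{The key kernel estimate is false.} The paper does not prove this proposition; it only cites the literature. So the question is whether your argument stands on its own. Its architecture (write $V_g(a^wf)(z)=\int K(z,w)V_gf(w)\,dw$, then split conically) is sound, but the estimate everything rests on does not hold.

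For the Gaussian window one has, up to a unimodular constant,
$W(\pi(z)g,\pi(w)g)(\zeta)=e^{\pm i\zeta\cdot J(z-w)}\,W(g,g)\bigl(\zeta-\tfrac{z+w}{2}\bigr)$.
Its modulus is therefore $Ce^{-c|\zeta-\frac{z+w}{2}|^2}$, with no factor $e^{-c|z-w|^2}$. The off-diagonal decay sits in the plane wave, not in the modulus. Already for $d=1$, $z=(x_1,0)$, $w=(x_2,0)$, the modulus is $Ce^{-(x-\frac{x_1+x_2}{2})^2-\xi^2}$, independent of $x_1-x_2$.

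Your remark that only boundedness of $a$ is used is the symptom. A bound $|K(z,w)|\le C\|a\|_{L^\infty}e^{-c|z-w|^2}$ would, via Schur's test and $a^w=V_g^\ast(V_ga^wV_g^\ast)V_g$, make every $a\in L^\infty(\rdd)$ an $L^2$-bounded Weyl symbol. This fails: $a(x,\xi)=e^{-2ix\cdot\xi}$ gives $a^wf=2^{-d}f(0)$.

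\textbf{The repair is your fallback, which is not optional.} The almost-diagonal bound $|K(z,w)|\le C_N(1+|z-w|)^{-N}$ for $a\in S^0_{0,0}$ is the indispensable input. It comes from integrating by parts in $\zeta$ against the plane wave, which is exactly where the bounded derivatives of $a$ enter; in the paper's language, it says $a^w\in FIO(\mathrm{id})$.

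With it, your proof of $WF_G(a^wf)\subset WF_G(f)$ goes through unchanged.

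For the inclusion in $\conesupp a$ you can even drop the interpolation on derivatives of $a$:
\begin{itemize}
\item In region (i), $|z-w|\le\varepsilon|z|$, the correct bound $|K(z,w)|\le C\int|a(\zeta)|e^{-c|\zeta-m|^2}\,d\zeta$ together with your splitting of the $\zeta$-integral already gives $|K(z,w)|\le C_N(1+|z|)^{-N}$. The lost factor $e^{-c|z-w|^2}$ is harmless, because this region has volume $O(|z|^{2d})$ and $|w|\le 2|z|$ there.
\item In region (ii), $1+|z-w|\gtrsim 1+|z|$. The almost-diagonal bound together with $1+|w|\le(1+|z|)(1+|z-w|)$ gives rapid decay directly, without splitting $|w|\le 2|z|$ versus $|w|>2|z|$.
\end{itemize}
With this repair the argument is complete.
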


This result was first established in this generality in
\cite{rodino2014} (see also \cite{hormander1991} and the recent
contribution \cite[Section 7]{nicola2025}, where it is presented
exactly in the form recalled here).

\subsection{Tame canonical transformations and associated
operators} (\cite{CGNR2013,tataru2004}). \label{sec:tamecan}

We now define a class of canonical transformations which arise
naturally when considering quadratic type Hamiltonians (see
\cite{yajima1991} and Section \ref{sec:compactness} below).

\begin{definition}
  A \emph{\tct}is a smooth map $\chi:\rdd\to\rdd$ such that
  \begin{itemize}
    \item $|\partial^\alpha \chi(z)|\leq C_\alpha$ for all
    $|\alpha|\geq 1$, $z\in\rdd$;

    \item $\chi$ is a canonical transformation, that is, if
    $(x,\xi)=\chi(y,\eta)$ we have $dy\wedge d\eta=dx\wedge
    d\xi$.
  \end{itemize}
\end{definition}

It is easy to see that every \tct is global bi--Lipschitz. Also,
the class of tame canonical transformations is closed with
respect to inversion and composition.

\begin{definition}(\cite{CGNR2013,tataru2004})
  Given a tame canonical transformation
  $\chi:\mathbb{R}^{2d}\to\mathbb{R}^{2d}$, we consider the class
  $FIO(\chi)$ of operators
  $A:\mathcal{S}(\mathbb{R}^d)\to\mathcal{S}'(\mathbb{R}^d)$ such
  that, for some (and hence any)
  $g\in\mathcal{S}(\mathbb{R}^d)\setminus\{0\}$ and for every
  $N>0$ there exists $C_N>0$ satisfying
  \begin{equation}\label{eq:1star}
    \bigl|\langle A\,\pi(z)g,\, \pi(w)g\rangle\bigr|
    \le C_N \,(1+|w-\chi(z)|)^{-N}
    \qquad \forall z,w\in\mathbb{R}^{2d}.
  \end{equation}
\end{definition}

In fact, in \cite{tataru2004} the particular case of a Gaussian
window $g$ was considered, while the extension to a general
window $g\in\cSrd\setminus\{0\}$ was considered in
\cite{CGNR2013}. It was also observed there, see \cite[Lemma
3.3]{CGNR2013}, that this class of operators is independent of
the choice of $g$. It is easy to see that any operator $A\in
FIO(\chi)$ extends to a linear continuous operator in $\cSrd$,
$\ltrd$ and $\cSprd$ (see \cite{CGNR2013}).

\begin{remark}
  We observe that when $\chi$ is linear, the operators in
  $FIO(\chi)$ are the so--called \textit{generalized metaplectic
  operators}. They can be characterized as operators of the form
  $A=S\,b^w$, where $S$ is a metaplectic operator and $b\in
  S^0_{0,0}(\mathbb{R}^{2d})$; see \cite{CGNR2014}.
\end{remark}

The following elementary but important property was observed in
\cite[Proposition 6.5]{tataru2004} and \cite[Theorem
3.6]{CGNR2013}, and will be used in the following.

\begin{proposition}\label{pro:composition}
  If $\chi_1$ and $\chi_2$ are tame canonical transformations,
  $A_1\in FIO(\chi_1)$ and $A_2\in FIO(\chi_2)$, then $A_1 A_2\in
  FIO(\chi_1\circ \chi_2)$.
\end{proposition}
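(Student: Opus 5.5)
The plan is to insert the reproducing formula of the short-time Fourier transform between $A_1$ and $A_2$, and then estimate the resulting phase space integral with a Peetre-type inequality. Throughout I fix a window $g\in\mathcal{S}(\mathbb{R}^d)$ with $\|g\|=(2\pi)^{-d/2}$, so that $V_g^\ast V_g=I$. This is allowed because the class $FIO(\chi)$ does not depend on the window.

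\emph{Step 1: the expansion.} Fix $z\in\mathbb{R}^{2d}$ and set $f=A_2\pi(z)g$. Since $A_2$ maps $\mathcal{S}(\mathbb{R}^d)$ continuously into itself, we have $f\in\mathcal{S}(\mathbb{R}^d)$. The inversion formula gives
\begin{equation*}
  f=\int_{\mathbb{R}^{2d}} V_gf(u)\,\pi(u)g\,du ,
\end{equation*}
where the integral converges in $\mathcal{S}(\mathbb{R}^d)$, because $V_gf$ decays rapidly and $u\mapsto\pi(u)g$ grows only polynomially in every Schwartz seminorm. Apply the continuous operator $A_1$ on $\mathcal{S}$ and pair with $\pi(w)g$. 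This yields
\begin{equation*}
  \langle A_1A_2\pi(z)g,\pi(w)g\rangle=\int_{\mathbb{R}^{2d}}\langle A_2\pi(z)g,\pi(u)g\rangle\,\langle A_1\pi(u)g,\pi(w)g\rangle\,du .
\end{equation*}
I expect the main technical point to be this step, namely passing $A_1$ and the pairing under the integral sign. Convergence in $\mathcal{S}$ together with the continuity of $A_1$ on $\mathcal{S}$ should settle it.

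\emph{Step 2: the kernel estimate.} Apply \eqref{eq:1star} to each factor, with exponents $N$ and $M$ to be chosen. The modulus of the matrix element is then bounded by
\begin{equation*}
  C\int_{\mathbb{R}^{2d}}(1+|u-\chi_2(z)|)^{-M}\,(1+|w-\chi_1(u)|)^{-N}\,du .
\end{equation*}
Because $\chi_1$ has bounded first derivatives, the mean value theorem makes it globally Lipschitz with some constant $L\ge1$. Hence
\begin{equation*}
  1+|w-\chi_1(\chi_2(z))|\le 1+|w-\chi_1(u)|+L|u-\chi_2(z)|\le(1+|w-\chi_1(u)|)(1+L|u-\chi_2(z)|).
\end{equation*}
It follows that
\begin{equation*}
  (1+|w-\chi_1(u)|)^{-N}\le L^N(1+|u-\chi_2(z)|)^{N}(1+|w-\chi_1\circ\chi_2(z)|)^{-N}.
\end{equation*}

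\emph{Step 3: conclusion.} Insert the last bound and choose $M=N+2d+1$. The remaining integral is
\begin{equation*}
  \int_{\mathbb{R}^{2d}}(1+|u-\chi_2(z)|)^{-2d-1}\,du ,
\end{equation*}
which is finite and independent of $z$ by translation invariance. This gives
\begin{equation*}
  |\langle A_1A_2\pi(z)g,\pi(w)g\rangle|\le C_N(1+|w-\chi_1\circ\chi_2(z)|)^{-N}
\end{equation*}
for every $N$, which is \eqref{eq:1star} for the map $\chi_1\circ\chi_2$. Finally, $\chi_1\circ\chi_2$ is again a tame canonical transformation: it is symplectic as a composition of symplectic maps, and its derivatives of order $\ge1$ are bounded by the chain rule and Fa\`a di Bruno's formula. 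Therefore $A_1A_2\in FIO(\chi_1\circ\chi_2)$.
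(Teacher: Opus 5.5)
Your argument is correct and is essentially the standard proof in the references the paper cites for this fact (Tataru; Cordero--Gr\"ochenig--Nicola--Rodino, Theorem 3.6), since the paper gives no proof of its own. You insert the reproducing formula $V_g^\ast V_g=I$ between $A_1$ and $A_2$ and absorb the shift through the Lipschitz continuity of $\chi_1$, the same mechanism the paper uses in the proof of Theorem \ref{the:compact-FIO}. The only difference is bookkeeping: you spend $2d+1$ extra orders of decay on the $A_2$ factor, whereas the volume-preserving change of variables $u'=\chi_1(u)$ together with $(1+|\cdot|)^{-s}\ast(1+|\cdot|)^{-s}\lesssim(1+|\cdot|)^{-s}$ for $s>2d$ keeps the same order on both factors, which matters only for finite-decay classes and not for $FIO(\chi)$, where every $N$ is available.
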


\subsection{Generating functions and associated operators}
\label{sec:ganerating-functions}
(\cite{arnold,stein1993,tataru2004})

It is well known from classical mechanics (see, e.g.,
\cite{arnold}) that canonical transformations can be manipulated
in an economic way by means of generating functions. We now
discuss this connection in the special case of tame canonical
transformations.

Following \cite{tataru2004}, for $k\in \mathbb{N}$ we define the
class $ S^{(k)}_{0,0}(\mathbb{R}^{2d}) $ of functions $a\in
C^\infty(\mathbb{R}^{2d})$ such that
\begin{equation*}
  \partial^\alpha a \in L^\infty
  \qquad \text{for }|\alpha|\ge k.
\end{equation*}

\begin{definition}
  If $\phi\in S^{(2)}_{0,0}(\mathbb{R}^{2d})$ and
  \begin{equation*}
    \bigl|\det\, \partial^2_{x\xi}\phi(x,\xi)\bigr|
    \ge \delta>0\quad\forall(x,\xi)\in\rd\times\rd
  \end{equation*}
  for some $\delta>0$, we say that $\phi$ is a \emph{tame phase}.
\end{definition}

Let $\chi:\mathbb{R}^{2d}\to\mathbb{R}^{2d}$ be a tame canonical
transformation. We write
\begin{equation*}
  (x,\xi)=\chi(y,\eta),
\end{equation*}
or equivalently
\begin{equation*}
  x=x(y,\eta),\qquad \xi=\xi(y,\eta).
\end{equation*}
Then $dx\wedge d\xi=dy\wedge d\eta$, hence
\begin{equation*}
  \xi\,dx-\eta\,dy=d\tilde S
\end{equation*}
for some function $\tilde S(y,\eta)$ in $S^{(2)}_{0,0}(\rdd)$,
uniquely determined up to an additive constant. Suppose that, for
some $\delta>0$,
\begin{equation*}
  \bigl|\det \tfrac{\partial x}{\partial \eta}(y,\eta) \bigr|
  \ge \delta>0
  \qquad \forall (y,\eta)\in\mathbb{R}^{d}\times \mathbb{R}^{d}.
\end{equation*}
Then, by Hadamard's global inverse function theorem (see, e.g.,
\cite[Theorem 6.2.4]{krantz}), we can globally solve the equation
\begin{equation*}
  x=x(y,\eta)
\end{equation*}
with respect to $\eta$ and obtain a function $\eta=\eta(x,y)$ in
$S^{(1)}_{0,0}(\mathbb{R}^{2d})$. Substituting in
$\tilde{S}(y,\eta)$ we obtain a function $S(x,y)$, called the
\emph{generating function} of $\chi$, which satisfies
\begin{equation*}
  \xi\,dx-\eta\,dy=dS.
\end{equation*}
Hence
\begin{equation*}
  \frac{\partial S}{\partial y}(x,y)=-\eta(x,y),
  \qquad
  \frac{\partial S}{\partial x}(x,y)=\xi\bigl(y,\eta(x,y)\bigr).
\end{equation*}
Moreover $S(x,y)$ is a tame phase.

Conversely, given a tame phase $S(x,y)$, the equations
\begin{equation*}
  \frac{\partial S}{\partial y}(x,y)=-\eta,
  \qquad
  \frac{\partial S}{\partial x}(x,y)=\xi
\end{equation*}
define a unique \tct $(x,\xi)=\chi(y,\eta)$.

Similarly, if $(x,\xi)=\chi(y,\eta)$ is a tame canonical
transformation, we have
\begin{equation*}
  \xi\,dx+y\,d\eta=d\bigl(\tilde S(y,\eta)+y\eta\bigr).
\end{equation*}
Suppose that, for some $\delta>0$,
\begin{equation*}
  \bigl|\det \tfrac{\partial x}{\partial y}(x,\eta)\bigr|
  \ge \delta>0\quad\forall(x,\eta)\in\rd\times\rd.
\end{equation*}
Then we can globally solve the equation $x=x(y,\eta)$ with
respect to $y$ and find a function $y=y(x,\eta)$ in
$S^{(1)}_{0,0}(\mathbb{R}^{2d})$. Substituting into $\tilde
S(y,\eta)+y\eta$ we obtain a function $\Phi(x,\eta)$ in
$S^{(2)}_{0,0}(\rdd)$, and we have
\begin{equation*}
  \xi\,dx+y\,d\eta=d\Phi.
\end{equation*}
Hence
\begin{equation*}
  \frac{\partial \Phi}{\partial x}(x,\eta)
  =\xi\bigl(y(x,\eta),\eta\bigr),
  \qquad
  \frac{\partial \Phi}{\partial \eta}(x,\eta)=y(x,\eta).
\end{equation*}
Moreover, $\Phi$ is a tame phase, called the generating function
of $\chi$ in the coordinates $(x,\eta)$.

Conversely, given a tame phase $\Phi(x,\eta)$, the equations
\begin{equation*}
  \frac{\partial \Phi}{\partial x}(x,\eta)=\xi,
  \qquad
  \frac{\partial \Phi}{\partial \eta}(x,\eta)=y
\end{equation*}
define a unique tame canonical transformation $\chi$.

With these two types of generating functions, $S(x,y)$ and
$\Phi(x,\eta)$, we associate two classes of integral operators:

\begin{itemize}
  \item \emph{Oscillatory integral operators} (OIO):
  \begin{equation*}
    Af(x)=\int_{\mathbb{R}^d} e^{iS(x,y)}\,a(x,y)\,f(y)\,dy,
  \end{equation*}
  where $S(x,y)$ is a tame phase and $a\in
  S^{0}_{0,0}(\mathbb{R}^{2d})$.

  \item \emph{Classical Fourier integral operators} (FIO):
  \begin{equation*}
    Af(x)=
    \int_{\mathbb{R}^d} e^{i\Phi(x,\eta)}\,b(x,\eta)\,
    \widehat f(\eta)\,d\eta,
  \end{equation*}
  where $\Phi(x,\eta)$ is a tame phase and $b\in
  S^{0}_{0,0}(\mathbb{R}^{2d})$.
\end{itemize}

One can see that they are linear continuous operators in $\cSrd$,
$\ltrd$ and $\cSprd$ (see, e.g., \cite{stein1993}).

We recall the following fact from \cite[Theorem 4.3]{CGNR2013}.

\begin{proposition}\label{pro:fio}
  A classical FIO $A$, with a tame phase $\Phi(x,\eta)$, belongs
  to the class $FIO(\chi)$, where $\chi$ is the tame canonical
  transformation associated with $\Phi$.
\end{proposition}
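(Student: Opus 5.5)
The plan is to check the defining estimate \eqref{eq:1star} directly, computing the matrix coefficients $\langle A\pi(z)g,\pi(w)g\rangle$ as oscillatory integrals and using non-stationary phase. Fix a Gaussian window $g$; by \cite[Lemma 3.3]{CGNR2013} it suffices to treat one window. Write $z=(y_0,\eta_0)$ and $w=(x_0,\xi_0)$. Since $\widehat{\pi(z)g}(\eta)=e^{-i(\eta-\eta_0)\cdot y_0}\,\hat g(\eta-\eta_0)$ up to harmless conventions, we get
\begin{equation*}
  \langle A\pi(z)g,\pi(w)g\rangle
  =\int\!\!\int e^{i(\Phi(x,\eta)-\eta\cdot y_0-x\cdot\xi_0)}\,
  b(x,\eta)\,\hat g(\eta-\eta_0)\,\overline{g(x-x_0)}\,dx\,d\eta
\end{equation*}
up to a unimodular factor. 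The amplitude is concentrated where $|x-x_0|\lesssim1$ and $|\eta-\eta_0|\lesssim1$, with Schwartz decay away from there.

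Next, Taylor expand the phase around $(x_0,\eta_0)$. Because $\Phi\in S^{(2)}_{0,0}$, the second-order remainder has bounded derivatives of every order $\ge2$, so it can be absorbed into a new amplitude. To keep that amplitude in a class with controlled growth, we multiply by the Gaussian factors; for the remainder $R(x,\eta)=O(|x-x_0|^2+|\eta-\eta_0|^2)$, this is still fine after shifting variables. The linear part of the phase has gradient
\begin{equation*}
  \bigl(\partial_x\Phi(x_0,\eta_0)-\xi_0,\;\partial_\eta\Phi(x_0,\eta_0)-y_0\bigr).
\end{equation*}
Repeated integration by parts with the operator $L=(1+|\nabla\text{phase}|^2)^{-1}(1-i\nabla\text{phase}\cdot\nabla)$ should give decay $C_N(1+|\xi_0-\partial_x\Phi(x_0,\eta_0)|+|y_0-\partial_\eta\Phi(x_0,\eta_0)|)^{-N}$. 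The phase gradient at a general point differs from its value at $(x_0,\eta_0)$ by at most $C(|x-x_0|+|\eta-\eta_0|)$, and this growth is absorbed by the Gaussian decay through Peetre's inequality.

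It remains to convert this to the form in \eqref{eq:1star}. Set $y_1=\partial_\eta\Phi(x_0,\eta_0)$, so that $\chi(y_1,\eta_0)=(x_0,\partial_x\Phi(x_0,\eta_0))$ by the definition of $\chi$. Then
\begin{equation*}
  |w-\chi(y_1,\eta_0)|+|z-(y_1,\eta_0)|\approx|\xi_0-\partial_x\Phi|+|y_0-y_1|.
\end{equation*}
Since $\chi$ is globally Lipschitz, $|w-\chi(z)|\le|w-\chi(y_1,\eta_0)|+C|z-(y_1,\eta_0)|$, which gives the required bound $(1+|w-\chi(z)|)^{-N}$.

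I expect the main obstacle to be the integration-by-parts step. The phase is only in $S^{(2)}_{0,0}$, so first derivatives of $\Phi$ are unbounded, and we must make sure that each application of $L$ produces only bounded coefficients times polynomial factors in $(x-x_0,\eta-\eta_0)$, which the windows then kill. The first approach I would try is to differentiate the phase after subtracting its linear Taylor part, so that the derivatives of the remaining part are $O(1+|x-x_0|+|\eta-\eta_0|)$. The lower bound on $|\det\partial^2_{x\eta}\Phi|$ is not needed for the estimate itself; it only ensures that $\chi$ is well defined and tame.
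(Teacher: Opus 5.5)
Your proposal is correct and is essentially the standard argument behind the result that the paper quotes from \cite[Theorem 4.3]{CGNR2013}; the paper itself gives no proof. That argument runs as you do: write the Gabor matrix as an oscillatory integral, remove the linear Taylor part of the phase at $(x_0,\eta_0)$ so that the remainder is absorbed into an amplitude lying uniformly in a bounded subset of $\mathcal{S}(\mathbb{R}^{2d})$, obtain decay in $|\xi_0-\partial_x\Phi(x_0,\eta_0)|+|y_0-\partial_\eta\Phi(x_0,\eta_0)|$, and convert it via the Lipschitz bound for $\chi$ at the point $(\partial_\eta\Phi(x_0,\eta_0),\eta_0)$. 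The step you flag as the main obstacle is harmless: with $\psi(x,\eta)=\Phi(x,\eta)-\eta\cdot y_0-x\cdot\xi_0$, every derivative of order $\ge 1$ of $\nabla\psi$ is bounded, so the coefficients $\nabla\psi/(1+|\nabla\psi|^2)$ of $L$ and all their derivatives are $O\bigl((1+|\nabla\psi|)^{-1}\bigr)$, and either of your two variants closes with Peetre's inequality and the window decay.
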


We now verify that a similar property holds for an OIO with a
tame phase $S(x,y)$.

\begin{proposition}\label{pro:CT}
  An OIO $A$, with a tame phase $S(x,y)$, belongs to the class
  $FIO(\chi)$, where $\chi$ is the tame canonical transformation
  associated with $S$.
\end{proposition}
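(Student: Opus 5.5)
The plan is to reduce Proposition \ref{pro:CT} to Proposition \ref{pro:fio} through Fourier inversion. Writing $f=(2\pi)^{-d}\int e^{iy\cdot\eta}\widehat f(\eta)\,d\eta$, I obtain formally
\begin{equation*}
  Af(x)=(2\pi)^{-d}\int\!\!\int e^{i(S(x,y)+y\cdot\eta)}a(x,y)\,dy\;\widehat f(\eta)\,d\eta .
\end{equation*}
For each fixed $(x,\eta)$ the inner integral is an oscillatory integral in $y$ whose phase $y\mapsto S(x,y)+y\cdot\eta$ has nondegenerate Hessian only if $\partial^2_{yy}S$ is invertible. That need not hold globally, so a direct stationary phase reduction is not available. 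I would therefore avoid it and work directly with the phase space characterization \eqref{eq:1star}. This is the approach of \cite{CGNR2013} for the classical FIO, and I would adapt it to the $(x,y)$ representation.

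Fix a Gaussian (or Schwartz) window $g$, and let $z=(z_1,\zeta)$ and $w=(w_1,\omega)$. Then
\begin{equation*}
  \langle A\pi(z)g,\pi(w)g\rangle=\int\!\!\int e^{i(S(x,y)+y\cdot\zeta-x\cdot\omega)}a(x,y)\,g(y-z_1)\overline{g(x-w_1)}\,dx\,dy .
\end{equation*}
The factor $g(y-z_1)\overline{g(x-w_1)}$ localizes $(x,y)$ near $(w_1,z_1)$. Taylor expand $S$ at $(w_1,z_1)$ to second order: the quadratic and higher remainder has bounded derivatives, because $S\in S^{(2)}_{0,0}$. Set $\Theta(x,y)=S(x,y)+y\cdot\zeta-x\cdot\omega$. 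Its gradient at $(w_1,z_1)$ is
\begin{equation*}
  \bigl(\partial_xS(w_1,z_1)-\omega,\ \partial_yS(w_1,z_1)+\zeta\bigr).
\end{equation*}
Integrating by parts with the operator $L=(1+|\nabla\Theta|^2)^{-1}(1-i\nabla\Theta\cdot\nabla)$, which is legitimate because all derivatives of $\Theta$ of order $\ge2$ are bounded and the amplitude is in $S^0_{0,0}$ times Schwartz factors, gives decay
\begin{equation*}
  C_N\bigl(1+|\partial_xS(w_1,z_1)-\omega|+|\partial_yS(w_1,z_1)+\zeta|\bigr)^{-N}.
\end{equation*}
To be precise, I would use the standard bound $|\nabla\Theta(x,y)|\ge|\nabla\Theta(w_1,z_1)|-C|(x-w_1,y-z_1)|$ and absorb the polynomial loss in $|(x-w_1,y-z_1)|$ into the Schwartz decay of $g$.

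It remains to convert this into $(1+|w-\chi(z)|)^{-N}$, and I expect this geometric step to be the main point. Let $(w_1',\omega')=\chi(z_1,\zeta)$. By definition $\partial_yS(w_1',z_1)=-\zeta$ and $\omega'=\partial_xS(w_1',z_1)$. Since $\partial^2_{xy}S$ is bounded and uniformly invertible, Hadamard's theorem shows that $x\mapsto -\partial_yS(x,z_1)$ is a global diffeomorphism with Lipschitz inverse, uniformly in $z_1$. Hence
\begin{equation*}
  |w_1-w_1'|\le C\,|\partial_yS(w_1,z_1)+\zeta| .
\end{equation*}
Moreover $|\omega-\omega'|\le|\omega-\partial_xS(w_1,z_1)|+|\partial_xS(w_1,z_1)-\partial_xS(w_1',z_1)|$, and the second term is at most $C|w_1-w_1'|$ because $\partial^2_{xx}S$ is bounded. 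Therefore
\begin{equation*}
  |w-\chi(z)|\lesssim|\partial_xS(w_1,z_1)-\omega|+|\partial_yS(w_1,z_1)+\zeta|,
\end{equation*}
which yields \eqref{eq:1star}.

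Throughout, the integrals are first justified for $f\in\cSrd$ and the identities are understood in the oscillatory sense, which is standard. The delicate part is the uniform global inversion step, which relies on the tame phase hypothesis $|\det\partial^2_{xy}S|\ge\delta$.
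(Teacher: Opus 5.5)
Your argument is correct, but it takes a different route from the paper. The paper never estimates the Gabor matrix of $A$ directly. It observes that $A=A'\circ\mathcal{F}^{-1}$, where $A'h(x)=\int e^{iS(x,\eta)}a(x,\eta)\widehat h(\eta)\,d\eta$ is the classical FIO with the same function $S$ as phase, its second variable now read as a frequency. Proposition \ref{pro:fio} (i.e.\ \cite[Theorem 4.3]{CGNR2013}) gives $A'\in FIO(\chi')$. The exact covariance $\mathcal{F}^{-1}\pi(y,\eta)g=c\,\pi(-\eta,y)\mathcal{F}^{-1}g$, with a Gaussian window, then transfers this to $A\in FIO(\chi'\circ J^{-1})$. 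What remains is only to check that $\chi'\circ J^{-1}$ is the transformation generated by $S$.

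In particular, the obstruction you raise at the start (a possibly degenerate $\partial^2_{yy}S$ after Fourier inversion of $f$) never arises there. No stationary phase is performed; the variable $y$ is simply relabelled as a frequency.

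Your proof instead redoes the Cordero--Gr\"ochenig--Nicola--Rodino argument directly in the $(x,y)$ representation, in two steps:
\begin{itemize}
\item Non-stationary phase with the operator $L$, together with the Peetre-type bound $1+|\nabla\Theta(w_1,z_1)|\le(1+|\nabla\Theta(x,y)|)(1+C|(x-w_1,y-z_1)|)$, gives decay in $|\nabla\Theta(w_1,z_1)|$.
\item The uniform Lipschitz inverse of $x\mapsto\partial_yS(x,z_1)$ (Hadamard) converts this into decay in $|w-\chi(z)|$.
\end{itemize}
Both steps check out, with constants independent of $z,w$.

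The paper's route is shorter and rests on the cited theorem plus an exact intertwining identity. Yours is self-contained. It also makes visible that the integration by parts uses only the bounds on $\partial^\alpha S$ for $|\alpha|\ge2$, while the nondegeneracy $|\det\partial^2_{xy}S|\ge\delta$ enters only in the geometric conversion step.

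One small correction: the second-order Taylor expansion you mention is not needed. Its remainder also does not have bounded derivatives of order $0$ and $1$. What your argument actually uses is the Lipschitz bound on $\nabla\Theta$.
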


\begin{proof}
An OIO $A$ with tame phase $S(x,y)$ and amplitude $a(x,y)$ can be
written as $A=A'\circ\mathcal{F}^{-1}$, where $A'$ is a classical
FIO with phase $S(x,\eta)$ and amplitude $a(x,\eta)$. Hence, by
Proposition \ref{pro:fio}, $A'\in FIO(\chi')$ for a tame
canonical transformation
\begin{equation*}
  (x,\xi)=\chi'(y,\eta)
\end{equation*}
implicitly defined by
\begin{equation*}
  \frac{\partial S}{\partial x}(x,\eta)=\xi,
  \qquad
  \frac{\partial S}{\partial \eta}(x,\eta)=y.
\end{equation*}
Hence
\begin{equation*}
  \bigl|\langle A'\,\pi(y,\eta)g,\, \pi(x,\xi)g\rangle\bigr|
  \le C_N \,(1+|(x,\xi)-\chi'(y,\eta)|)^{-N}.
\end{equation*}
Replacing in this formula the pair $(y,\eta)$ with the pair
$(-\eta,y)$, choosing $g$ such that $\mathcal{F}^{-1}g=g$ (for
example, a suitable Gaussian) and using that
\begin{equation*}
  \mathcal{F}^{-1} \pi(y,\eta)g
  =c\,  \pi(-\eta,y)\mathcal{F}^{-1} g,
\end{equation*}
with $|c|=1$, we see that $A\in FIO(\chi)$, with $\chi=\chi'\circ
J^{-1}$, where
\begin{equation}\label{eq:J}
  J=\begin{pmatrix}0 & I \\ -I & 0\end{pmatrix},
  \qquad
  J^{-1}=\begin{pmatrix}0 & -I \\ I & 0\end{pmatrix}.
\end{equation}
Therefore, if we introduce the variables $(y', \eta')$ via
\begin{equation*}
  \begin{pmatrix} y \\ \eta \end{pmatrix}
  = J^{-1}\begin{pmatrix} y' \\ \eta' \end{pmatrix}
  = \begin{pmatrix} -\eta' \\ y' \end{pmatrix}
\end{equation*}
we see that
\begin{equation*}
  (x,\xi)=\chi'(y,\eta)=\chi'\circ J^{-1}(y',\eta')
  =\chi(y',\eta')
\end{equation*}
and
\begin{equation*}
  \frac{\partial S}{\partial x}(x,y')=\xi,
  \qquad
  \frac{\partial S}{\partial y'}(x,y')=-\eta'.
\end{equation*}
Hence $\chi$ is precisely the tame canonical transformation
associated with the generating function $S$.
\end{proof}

\section{Compactness of localized Fourier integral operators}
\label{sec:compact-FIO}

Consider a \tct $\chi:\rdd\to\rdd$. We now make a fundamental
assumption:
\begin{equation}\label{eq:2star}
  \begin{aligned}
    &\text{There exists an open conic neighborhood }
    \Gamma\subset \rdd\setminus\{0\}
    \text{ of }\{0\}\times(\mathbb{R}^d\setminus\{0\}) \\
    &\text{and $R>0$ such that } \eta\in\rd,\ |\eta|>R
    \;\Rightarrow\; \chi(0,\eta)\not \in\Gamma.
  \end{aligned}
\end{equation}
We shall refer to \eqref{eq:2star} as the \emph{vertical
nonrefocusing condition}. It says that high-frequency vertical
covectors based over a spatially localized region cannot be
mapped by $\chi$ back into a vertical direction.

\begin{remark}[The linear symplectic test case]\label{rem:linear}
  Suppose that $\chi$ is linear, hence represented by a matrix of
  the form
  \begin{equation*}
    \begin{pmatrix}
    A & B\\
    C & D
    \end{pmatrix}
    \in Sp(d,\mathbb{R}).
  \end{equation*}
  Then condition \eqref{eq:2star} is equivalent to
  \begin{equation*}
    |B \eta| \geq \varepsilon |D \eta|
    \qquad \forall\eta\in\mathbb{R}^d,
  \end{equation*}
  for some $\varepsilon>0$. In turn, this is equivalent to $\det
  B\neq 0$, since $\chi$ is a symplectic matrix, hence
  nondegenerate.
\end{remark}

\begin{proposition}\label{pro:symm}
  The vertical nonrefocusing condition \eqref{eq:2star} is
  satisfied by a tame canonical transformation $\chi$ if and only
  if it is satisfied by $\chi^{-1}$.
\end{proposition}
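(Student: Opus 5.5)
By the symmetry $(\chi^{-1})^{-1}=\chi$, and since $\chi^{-1}$ is again a tame canonical transformation, it suffices to show one implication: if $\chi$ satisfies \eqref{eq:2star}, then so does $\chi^{-1}$. I will argue by contradiction, after first recasting \eqref{eq:2star} as a quantitative statement.

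\emph{Step 1 (reformulation).} Write $\chi(0,\eta)=(x(\eta),\xi(\eta))$. The cones $\Gamma_\varepsilon=\{(x,\xi)\neq0:\ |x|<\varepsilon|\xi|\}$ form a basis of open conic neighborhoods of $\{0\}\times(\mathbb{R}^d\setminus\{0\})$. By compactness of $\overline{\Gamma}\cap\mathbb{S}^{2d-1}$ arguments, every such $\Gamma$ contains some $\Gamma_\varepsilon$. Hence \eqref{eq:2star} is equivalent to the existence of $\varepsilon>0$ and $R>0$ such that
\begin{equation*}
  |x(\eta)|\geq\varepsilon|\xi(\eta)| \qquad \text{for } |\eta|>R .
\end{equation*}
Since $\chi$ is a tame canonical transformation, it is globally bi-Lipschitz. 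Therefore
\begin{equation*}
  c|\eta|-C\le|\chi(0,\eta)|\le C(1+|\eta|).
\end{equation*}
From this, the condition is in turn equivalent to $|x(\eta)|\geq c'|\eta|$ for $|\eta|$ large, for some $c'>0$. The same reformulation applies to $\chi^{-1}$.

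\emph{Step 2 (contradiction).} Suppose $\chi$ satisfies \eqref{eq:2star} but $\chi^{-1}$ does not. Then there are $\eta_n$ with $|\eta_n|\to\infty$ such that $\chi^{-1}(0,\eta_n)=(y_n,\zeta_n)$ satisfies $|y_n|=o(|\eta_n|)$. By the bi-Lipschitz bounds, $|(y_n,\zeta_n)|\asymp|\eta_n|$, so $|\zeta_n|\asymp|\eta_n|\to\infty$ and $|y_n|=o(|\zeta_n|)$.

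Now $\chi(y_n,\zeta_n)=(0,\eta_n)$. Since $\chi$ is globally Lipschitz with constant $L$,
\begin{equation*}
  |\chi(0,\zeta_n)-(0,\eta_n)|\le L|y_n| .
\end{equation*}
Taking the spatial component gives $|x(\zeta_n)|\le L|y_n|=o(|\zeta_n|)$. This contradicts Step 1 applied to $\chi$ along the sequence $\zeta_n$.

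The only mildly delicate point is Step 1: making precise that "not in some conic neighborhood of the vertical" is the same as the linear lower bound on the spatial component. This uses the bi-Lipschitz property to compare $|\xi(\eta)|$, $|\chi(0,\eta)|$ and $|\eta|$ at infinity. Once that is in place, the argument is just the Lipschitz estimate above.
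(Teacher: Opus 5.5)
Your proof is correct and is essentially the paper's argument with the roles of $\chi$ and $\chi^{-1}$ interchanged: the paper proves the contrapositive by pushing a refocusing sequence for $\chi$ through $\chi^{-1}(0,\xi_n)$ and using the Lipschitz bound of $\chi^{-1}$, while you push a refocusing sequence for $\chi^{-1}$ through $\chi(0,\zeta_n)$ and use the Lipschitz bound of $\chi$. One small wording fix in Step 1: the compactness needed is that of $\{0\}\times\mathbb{S}^{d-1}$ inside the open set $\Gamma\cap\mathbb{S}^{2d-1}$, not of $\overline{\Gamma}\cap\mathbb{S}^{2d-1}$, but the claim that every such $\Gamma$ contains some $\Gamma_\varepsilon$ is right.
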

\begin{proof}
It is best to work with the logical negation of \eqref{eq:2star}.

We observe that $\chi$ does not satisfy \eqref{eq:2star} if and
only if there exist sequences $x_n,\xi_n,\eta_n\in\rd$, with
$\xi_n,\eta_n\not=0$, such that
\begin{equation*}
    (x_n,\xi_n)=\chi(0,\eta_n),
\end{equation*}
\begin{equation*}
    |\eta_n|\to+\infty\qquad |x_n|/|\xi_n|\to0.
\end{equation*}
Assuming this, we observe that, since $\chi^{-1}$ is globally
Lipschitz and $|x_n|\leq C|\xi_n|$, we have $|\xi_n|\to+\infty$.
Moreover, if we define
\begin{equation*}
  (y'_n,\eta'_n):=\chi^{-1}(0,\xi_n),
\end{equation*}
we have
\begin{align*}
  |y'_n|=|y'_n-0|
  &\leq |\chi^{-1}(0,\xi_n)-\chi^{-1}(x_n,\xi_n)|\\
  &\leq C|x_n|\\
  &=C\varepsilon_n|\xi_n|,
\end{align*}
with $\varepsilon_n= |x_n|/|\xi_n|\to0$, where we used again that
$\chi^{-1}$ is globally Lipschitz. Since
$(0,\xi_n)=\chi(y'_n,\eta'_n)$ and $\chi$ is globally Lipschitz,
we obtain
\begin{equation*}
  |y'_n|+|\eta'_n|\to+\infty
\end{equation*}
and
\begin{equation*}
    |y'_n|\leq C' \varepsilon_n(1+|y'_n|+|\eta'_n|),
\end{equation*}
which implies $|y'_n|/|\eta'_n|\to 0$.

This means that $\chi^{-1}$ does not satisfy \eqref{eq:2star}.
\end{proof}
The following result deals with the robustness of vertical
nonrefocusing.

\begin{proposition}\label{pro:pertur}
Let $\chi$ and $\chi'$ be two canonical transformations such that
\begin{equation*}
  \lim_{|\eta|\to+\infty}
  \frac{\bigl|\chi(0,\eta)-\chi'(0,\eta)\bigr|}{|\eta|}=0.
\end{equation*}
Then, if $\chi$ satisfies \eqref{eq:2star}, also $\chi'$
satisfies \eqref{eq:2star}.
\end{proposition}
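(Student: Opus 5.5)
We argue by contradiction, in the spirit of the proof of Proposition \ref{pro:symm}, using the sequential reformulation of the negation of \eqref{eq:2star}. Suppose $\chi'$ violates \eqref{eq:2star}. Then there are $\eta_n\in\rd$ with $|\eta_n|\to+\infty$ such that, writing $(x'_n,\xi'_n)=\chi'(0,\eta_n)$, we have $\xi'_n\neq0$ and $|x'_n|/|\xi'_n|\to0$. To see this, take a decreasing family of narrow open conic neighborhoods $\Gamma_k=\{(x,\xi):|x|<|\xi|/k\}$ of the vertical cone. For each $k$ and each $R=k$, the failure of \eqref{eq:2star} gives a point $\eta$ with $|\eta|>k$ and $\chi'(0,\eta)\in\Gamma_k$. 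A diagonal choice then produces the sequence.

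Next set $(x_n,\xi_n)=\chi(0,\eta_n)$ and $\varepsilon_n=|\chi(0,\eta_n)-\chi'(0,\eta_n)|/|\eta_n|$, so that $\varepsilon_n\to0$ by hypothesis. The key quantitative step is to show that $|\xi'_n|\gtrsim|\eta_n|$, so that the perturbation, of size $\varepsilon_n|\eta_n|$, is negligible compared with the size of $(x'_n,\xi'_n)$.

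For this we use that $\chi$ is a tame canonical transformation, hence globally bi-Lipschitz. Since $\chi^{-1}$ is Lipschitz, $|\chi(0,\eta_n)-\chi(0,0)|\geq c|\eta_n|$, so $|(x_n,\xi_n)|\geq c|\eta_n|-C$. Consequently
\[
|(x'_n,\xi'_n)|\geq c|\eta_n|-C-\varepsilon_n|\eta_n|\geq \tfrac{c}{2}|\eta_n|
\]
for large $n$. Because $|x'_n|=o(|\xi'_n|)$, this yields $|\xi'_n|\geq \tfrac{c}{4}|\eta_n|$ eventually. This is the step where some bi-Lipschitz (lower) bound must be used. Only $\chi$ is assumed to satisfy it, but that suffices, since the lower bound is transferred to $\chi'$ through the hypothesis. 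I expect this to be the only delicate point: without a lower bound on $|\chi(0,\eta)|$, the perturbation could dominate. If $\chi'$ is not assumed tame, the argument above still works, because it only uses properties of $\chi$.

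Finally,
\[
|x_n|\leq |x'_n|+\varepsilon_n|\eta_n|=o(|\xi'_n|),\qquad |\xi_n|\geq|\xi'_n|-\varepsilon_n|\eta_n|\geq \tfrac{c}{8}|\eta_n|,
\]
and $|\xi'_n|\leq|\xi_n|+\varepsilon_n|\eta_n|\leq C|\xi_n|$. Hence $\xi_n\neq0$ and $|x_n|/|\xi_n|\to0$ with $|\eta_n|\to\infty$. So for any open conic neighborhood $\Gamma$ of the vertical cone and any $R$, the point $\chi(0,\eta_n)$ lies in $\Gamma$ with $|\eta_n|>R$ for large $n$. This uses the fact that $\Gamma\supset\{(x,\xi):|x|<\delta|\xi|\}$ for some $\delta>0$, by compactness of $\{0\}\times\mathbb{S}^{d-1}$. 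Thus $\chi$ violates \eqref{eq:2star}, a contradiction.
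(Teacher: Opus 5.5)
Your argument is correct. Its core is the same as the paper's: compare $\chi(0,\eta)=(x,\xi)$ and $\chi'(0,\eta)=(x',\xi')$ componentwise via the triangle inequality, and absorb the $o(|\eta|)$ discrepancy against a linear lower bound on $|\chi'(0,\eta)|$ in terms of $|\eta|$. The differences lie in the packaging and in where that lower bound comes from.

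The paper argues directly. From $|x|\ge\varepsilon|\xi|$ for $|\eta|>R$ it gets $|x'|\ge\varepsilon|\xi'|-c_{R'}|\eta|$ with $c_{R'}\to0$. It then uses that $(\chi')^{-1}$ is globally Lipschitz, i.e.\ $|\eta|\le C(1+|x'|+|\xi'|)$, to reach the explicit aperture $|x'|\ge\frac{\varepsilon}{4+\varepsilon}|\xi'|$.

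You argue by contradiction along a sequence. You obtain $|\chi'(0,\eta_n)|\ge\frac{c}{2}|\eta_n|$ from the Lipschitz property of $\chi^{-1}$, transferred to $\chi'$ through the hypothesis.

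The paper's route buys a quantitative cone for $\chi'$ in terms of the one for $\chi$. Yours buys independence from any tameness of $\chi'$: you only need $|\chi(0,\eta)|\ge c|\eta|-C$. This matches the statement's wording ``two canonical transformations'' more faithfully than the paper's proof, which tacitly uses that $\chi'$ is tame.
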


\begin{proof}
Let $(x,\xi)=\chi(0,\eta)$ and $(x',\xi')=\chi'(0,\eta)$, for
$\eta\in\mathbb{R}^d$. By assumption, there exist $R>0$ and
$\varepsilon>0$ such that
\begin{equation*}
  |\eta|>R \ \Rightarrow\ |x|\ge \varepsilon|\xi|.
\end{equation*}
Hence, for $|\eta|>R'\ge R$,
\begin{equation*}
  |x'|\ge \varepsilon|\xi'|-\lvert x-x'\rvert
  -\varepsilon\lvert \xi-\xi'\rvert
  \ge \varepsilon|\xi'|-c_{R'}|\eta|,
\end{equation*}
where $c_{R'}\to 0$ as $R'\to+\infty$. Since $(\chi')^{-1}$ is
globally Lipschitz, we have
\begin{equation*}
  |\eta|\le C\,(1+|x'|+|\xi'|),
\end{equation*}
and therefore
\begin{equation*}
  |x'|\ge \frac{\varepsilon}{2}\bigl(|\xi'|-1\bigr)
  \quad\text{and}\quad
  2\le |x'|+|\xi'|
\end{equation*}
for $R'$ sufficiently large, which implies
\begin{equation*}
  |x'|\ge \frac{\varepsilon}{4+\varepsilon}\,|\xi'|.
\end{equation*}
\end{proof}
We can now state the main result of this section.
\begin{theorem}[Localized compactness under vertical
nonrefocusing]\label{the:compact-FIO} Let $\chi$ be a tame
canonical transformation satisfying the vertical nonrefocusing
condition \eqref{eq:2star}. Let $A\in FIO(\chi)$. Then, if
$E,F\subset\mathbb{R}^d$ are compact subsets, the operator
$\mathbf{1}_E\,A\,\mathbf{1}_F$ is compact in
$L^2(\mathbb{R}^d)$.
\end{theorem}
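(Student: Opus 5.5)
The plan is to replace the sharp cutoffs by Schwartz cutoffs and then show that the smoothed operator maps $\cSprd$ into $\cSrd$. Compactness will then follow from a standard functional-analytic argument. Concretely, I choose $\varphi,\psi\in C^\infty_c(\rd)$ with $\varphi=1$ on $E$ and $\psi=1$ on $F$. Then
\begin{equation*}
  \mathbf{1}_E A\mathbf{1}_F=\mathbf{1}_E\,(\varphi A\psi)\,\mathbf{1}_F .
\end{equation*}
Since multiplication by $\mathbf{1}_E$ and by $\mathbf{1}_F$ is bounded on $\ltrd$, it suffices to prove that $K:=\varphi A\psi$ is compact on $\ltrd$.

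The core step is to show that $WF_G(Kf)=\emptyset$ for every $f\in\cSprd$, so that $Kf\in\cSrd$ by \eqref{eq:WFS}. By Example \ref{exa:ex1} and Proposition \ref{pro:micolocality}, $WF_G(\psi f)\subset\{0\}\times(\rd\setminus\{0\})$. The next step is a propagation statement for $FIO(\chi)$: for $u\in\cSprd$, $WF_G(Au)\subset\chi_\infty(WF_G(u))$. Here $\chi_\infty$ denotes the conic "asymptotic" image: $w_0\in\chi_\infty(W)$ if there are $z_n$ with $|z_n|\to\infty$, $z_n/|z_n|$ accumulating on $W$, and $\chi(z_n)/|\chi(z_n)|\to w_0/|w_0|$.

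I will prove this directly from \eqref{eq:1star} and the inversion formula:
\begin{equation*}
  V_g(Au)(w)=\int_{\rdd}\langle A\pi(z)g,\pi(w)g\rangle\,V_gu(z)\,dz .
\end{equation*}
Take $w$ in a narrow cone $\Gamma_1$ around $w_0\notin\chi_\infty(WF_G u)$. I split the $z$-integral into two regions.
\begin{itemize}
  \item[(a)] The region where $z$ lies in a conic neighborhood $\Gamma_2$ of $WF_G(u)$. There $\chi(z)$ stays conically away from $\Gamma_1$ for $|z|$ large. Combined with the bi-Lipschitz property, this gives $|w-\chi(z)|\gtrsim |w|+|z|$. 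The decay in \eqref{eq:1star} then beats the polynomial growth of $V_gu$ from Proposition \ref{pro:growth}.
  \item[(b)] The complementary region. There $V_gu$ decays rapidly, and $\int (1+|w-\chi(z)|)^{-N}\,dz$ is uniformly bounded, using the change of variables $z\mapsto\chi(z)$ with unit Jacobian. To get decay in $w$, I use $|z|\gtrsim |\chi(z)|-C\ge |w|-|w-\chi(z)|-C$ and split according to whether $|w-\chi(z)|\ge |w|/2$.
\end{itemize}
Applying this propagation statement with $u=\psi f$, I use \eqref{eq:2star} to identify the image. Points $(y,\eta)$ conically close to the vertical axis with $|\eta|$ large satisfy $|y|=o(|\eta|)$, so $\chi(y,\eta)$ differs from $\chi(0,\eta)$ by $o(|\eta|)$, as in Proposition \ref{pro:pertur}. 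Hence $\chi_\infty(\{0\}\times(\rd\setminus\{0\}))$ avoids a smaller cone $\Gamma'\Subset\Gamma$ around the vertical axis. Since $\conesupp\varphi$ is the vertical axis, Proposition \ref{pro:micolocality} gives
\begin{equation*}
  WF_G(\varphi A\psi f)\subset\{0\}\times(\rd\setminus\{0\})\cap\chi_\infty(WF_G(\psi f))=\emptyset .
\end{equation*}

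Finally, from $K:\cSprd\to\cSrd$ I pass to compactness. By the closed graph theorem (these are Fréchet/LF-type spaces), $K$ maps a fixed weighted Sobolev space $H^{-s}_{-s}\supset\ltrd$ boundedly into some $H^{s'}_{s'}$ with $s'>0$. A cleaner route is a Banach–Steinhaus-type argument on the bounded set $\{\|f\|_{L^2}\le1\}\subset\cSprd$, which is equicontinuous, so its image is bounded in $\cSrd$. The embedding $H^{s'}_{s'}\hookrightarrow\ltrd$, or $\cSrd\hookrightarrow\ltrd$ on bounded sets, is compact, so $K$ maps the unit ball of $\ltrd$ into a relatively compact set. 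Alternatively, the estimates in step (a) and (b) are uniform and give directly $|V_g(Kf)(w)|\le C_N(1+|w|)^{-N}\|f\|_{L^2}$.

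The main obstacle is the propagation step. There one must carefully handle the uniformity of the constants and show that the conic separation in (a) really yields $|w-\chi(z)|\gtrsim|w|+|z|$. This requires $\chi$ to be tame and bi-Lipschitz, and it is exactly where \eqref{eq:2star} and its perturbation stability enter.
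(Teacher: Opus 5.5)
Your proof is correct. It shares the paper's overall architecture: Schwartz cutoffs, showing $\mathcal{S}'\to\mathcal{S}$ via $WF_G$ and microlocality of the cutoff, the STFT kernel bound \eqref{eq:1star}, and then closed graph plus a compact weighted Sobolev embedding. The central estimate, however, is organized differently.

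\emph{The paper's route.} It substitutes $z\mapsto\chi^{-1}(z)$ and splits the integral in the \emph{image} variable. Away from the vertical cone it uses conic separation from $w$. Near the vertical cone, at high frequency, it needs \eqref{eq:3star}, which is nonrefocusing for $\chi^{-1}$; this is why Proposition \ref{pro:symm} is invoked.

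\emph{Your route.} You split in the \emph{source} variable, according to a conic neighbourhood of $WF_G(u)$. This yields a general propagation lemma $WF_G(Au)\subset\chi_\infty(WF_G(u))$, valid for every $A\in FIO(\chi)$ and $u\in\cSprd$. You then use \eqref{eq:2star} for $\chi$ itself, together with the Lipschitz perturbation argument of Proposition \ref{pro:pertur}, to show that $\chi_\infty$ of the vertical axis misses a cone around it.

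\emph{What each buys.} Your version bypasses Proposition \ref{pro:symm}. It also isolates a reusable propagation statement, which makes transparent that the hypothesis concerns only where $\chi$ sends the vertical cone. The paper's version is a single tailored computation that needs no notion of asymptotic conic image.

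\emph{Two points to spell out.} First, in step (a) you should state the compactness/contradiction argument on the sphere. It shows that $w_0\notin\chi_\infty(WF_G(u))$ yields cones $\Gamma_1\Subset\Gamma_1'$ around $w_0$, a conic neighbourhood $\Gamma_2$ of $WF_G(u)$, and $R>0$ with $\chi(\Gamma_2\cap\{|z|>R\})\cap\Gamma_1'=\emptyset$. The bounded part $|z|\le R$ is then trivial. Second, in the final step the parenthetical about Fréchet/LF spaces is unnecessary. The closed graph theorem is applied between Hilbert spaces, for instance $\ltrd\to H^{s'}_{s'}$ followed by the compact embedding $H^{s'}_{s'}\hookrightarrow\ltrd$. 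Only the continuity of $K$ on $\cSprd$ is needed to check that the graph is closed. This is the dual of the paper's choice ($X\to\ltrd$ with $\ltrd\hookrightarrow X$ compact), and both are fine.
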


\begin{proof}
Let $\varphi\in\mathcal{S}(\mathbb{R}^d)$, with $\varphi=1$ on
$E\cup F$. Then
\begin{equation*}
  \mathbf{1}_E\,A\,\mathbf{1}_F
  =
  \mathbf{1}_E\,\varphi\,A\,\varphi\,\mathbf{1}_F.
\end{equation*}
Therefore, it is sufficient to show that $\varphi A \varphi$ is
compact in $L^2(\rd)$.

Now, $\varphi A \varphi$ extends to a continuous map
$\mathcal{S}'(\mathbb{R}^d)\to\mathcal{S}'(\mathbb{R}^d)$, and we
will prove that, in fact, it maps $\mathcal{S}'(\mathbb{R}^d)$
into $\mathcal{S}(\mathbb{R}^d)$. Assuming this latter fact, if
we consider the weighted Sobolev space
\begin{equation*}
  X:=(1+x^2)(1-\Delta) \big(L^2(\rd)\big),
\end{equation*}
we see that $\varphi A\varphi:X\to L^2(\rd)$ continuously by the
closed graph theorem, and the conclusion follows, since the
embedding $L^2(\rd) \hookrightarrow X$ is compact (see, e.g.,
\cite{nicola2010}).

Therefore, we must show that if $f\in\mathcal{S}'(\mathbb{R}^d)$,
then $\varphi A \varphi f\in\mathcal{S}(\mathbb{R}^d)$, that is,
by \eqref{eq:WFS},
\begin{equation*}
  WF_G(\varphi A \varphi f)=\emptyset.
\end{equation*}
Viewing $\varphi$ as a symbol in $S^0_{0,0}(\mathbb{R}^{2d})$
independent of $\xi$, we have, by Example \ref{exa:ex1},
\begin{equation*}
  \conesupp\varphi=\{0\}\times(\mathbb{R}^d\setminus\{0\}).
\end{equation*}
Hence, by Proposition \ref{pro:micolocality}, for all
$f\in\mathcal{S}'(\mathbb{R}^d)$ we have
\begin{equation*}
  WF_G(\varphi A \varphi f)
  \subset
  \{0\}\times(\mathbb{R}^d\setminus\{0\})
  \cap WF_G(A\varphi f).
\end{equation*}
Thus it suffices to show that this last intersection is empty. To
this end, using the inversion formula $V_g^*V_g=I$ in $\cSprd$
(assuming $\|g\|_{L^2}=(2\pi)^{-d/2}$; see Section
\ref{sec:phase-space}), we can write, in the weak sense,
\begin{equation*}
  \varphi f =
  \int_{\mathbb{R}^{2d}} V_g(\varphi f)(z)\,\pi(z)g \, dz
\end{equation*}
and therefore, for $w \in \mathbb{R}^{2d}$,
\begin{equation*}
  V_g(A\varphi f)(w) =
  \int_{\mathbb{R}^{2d}} V_g(\varphi f)(z)\,
  \langle A\pi(z)g,\pi(w)g\rangle \, dz.
\end{equation*}
Using \eqref{eq:1star}, for every $N \in \mathbb{N}$ there exists
$C_N > 0$ such that
\begin{equation*}
  \lvert V_g(A\varphi f)(w)\rvert
  \le C_N \int_{\mathbb{R}^{2d}}
  (1+\lvert w-\chi(z)\rvert)^{-N}\,
  \lvert V_g(\varphi f)(z)\rvert \, dz.
\end{equation*}
Since $\chi$ is a canonical transformation, its Jacobian is $=1$,
and therefore
\begin{equation*}
  \lvert V_g(A\varphi f)(w)\rvert
  \le C_N \int_{\mathbb{R}^{2d}} (1+\lvert w-z\rvert)^{-N}\,
  \lvert V_g(\varphi f)(\chi^{-1}(z))\rvert \, dz.
\end{equation*}
We have to prove that this function of $w$ has fast decay in a
conic neighborhood of $\{0\}\times(\mathbb{R}^d\setminus\{0\})$.

For $\varepsilon>0$, let
\begin{equation*}
  \Gamma_\varepsilon :=
  \{(x,\xi)\in\mathbb{R}^{2d}\setminus\{0\} :
  \lvert x\rvert < \varepsilon \lvert \xi\rvert\}.
\end{equation*}
By the assumption and Proposition \ref{pro:symm}, the map
$\chi^{-1}$ satisfies \eqref{eq:2star} as well, that is
\begin{equation}\label{eq:gammadelta}
  \begin{aligned}
    &\text{There exist $\delta>0$ and $R>0$ such that}\\
    &\xi\in\rd,\ |\xi|>R
    \;\Rightarrow\; \chi^{-1}(0,\xi)\not \in \Gamma_\delta.
  \end{aligned}
\end{equation}
We show that if $\varepsilon$ is sufficiently small, possibly
taking a smaller $\delta$ and a larger $R$ we have
\begin{equation}
  (x,\xi)\in \Gamma_\varepsilon,\ \lvert \xi\rvert \ge R
  \ \Longrightarrow\
  \chi^{-1}(x,\xi)\in \Gamma_\delta^{\,c}
\end{equation}
where $\Gamma_\delta^{\,c}$ stands for the complement of
$\Gamma_\delta$ in $\rdd\setminus\{0\}$.

Indeed, by \eqref{eq:gammadelta} and setting
$(y,\eta)=\chi^{-1}(x,\xi)$, we have
\begin{equation*}
  |y(0,\xi)| \ge \delta\,|\eta(0,\xi)|
  \qquad \text{for }|\xi|\ge R.
\end{equation*}
Moreover
\begin{equation*}
  |y(x,\xi)-y(0,\xi)| + |\eta(x,\xi)-\eta(0,\xi)|
  \le C\,|x|.
\end{equation*}
since $\chi^{-1}$ is a tame canonical transformation, hence
globally Lipschitz.

Therefore, if $(x,\xi)\in\Gamma_\varepsilon$ (thus
$|x|<\varepsilon|\xi|$),
\begin{align*}
  |y(x,\xi)|
  &\ge \delta\,|\eta(x,\xi)| - C'|x| \\
  &\ge \delta\,|\eta(x,\xi)| - C'\varepsilon|\xi| \\
  &\ge \delta\,|\eta(x,\xi)|
  - C''\varepsilon\bigl(1+|\eta(x,\xi)|+|y(x,\xi)|\bigr),
\end{align*}
where in the last step we used that $\chi$ is tame too, hence
globally Lipschitz.

Thus, if $\varepsilon$ is sufficiently small,
\begin{equation*}
  |y(x,\xi)| \ge C\,|\eta(x,\xi)| - C'
  \qquad \text{for }(x,\xi)\in\Gamma_\varepsilon,\ |\xi|\ge R,
\end{equation*}
for suitable constants $C,C'>0$.

If $|\xi|\ge R$, we also have, since $\chi$ is globally
Lipschitz,
\begin{equation*}
  1+|y(x,\xi)|+|\eta(x,\xi)| \ge C''|\xi| \ge C''R.
\end{equation*}
Hence, if $R$ is sufficiently large, $|y(x,\xi)| \ge
\delta\,|\eta(x,\xi)|$ for some $\delta>0$. Therefore, possibly
for a smaller $\delta$ and a larger $R$, we have
\begin{equation}\label{eq:3star}
  (x,\xi)\in\Gamma_\varepsilon,\ |\xi|\ge R
  \ \Rightarrow\ \chi^{-1}(x,\xi)\in\Gamma_\delta^{\,c}.
\end{equation}
Moreover, we may assume $\varepsilon<\delta$, so that
$\Gamma_\varepsilon\Subset\Gamma_\delta$.

We now return to the last integral, for which we now prove rapid
decay for $w\in\Gamma_{\varepsilon'}$, for every
$\varepsilon'<\varepsilon$. We observe that
\begin{equation*}
  \Gamma_{\varepsilon'}\Subset
  \Gamma_{\varepsilon}\Subset
  \Gamma_\delta.
\end{equation*}
For every $M>0$, $N>0$ we have
\begin{equation*}
  (1+|w|)^M\,|V_g(A\varphi f)(w)|
  \le C_N \int_{\mathbb{R}^{2d}}
  \frac{(1+|w|)^M}{(1+|z-w|)^N}\,
  \bigl|V_g(\varphi f)\bigl(\chi^{-1}(z)\bigr)\bigr|\,dz
\end{equation*}
and we split the last integral as
\begin{equation*}
  C_N\left[
  \int_{\Gamma^{(1)}_{\varepsilon,R}}\cdots+
  \int_{\Gamma^{(2)}_{\varepsilon,R}}\cdots+
  \int_{\Gamma_\varepsilon^{\,c}}\cdots\right],
\end{equation*}
where
\begin{equation*}
  \Gamma^{(1)}_{\varepsilon,R}:=
  \Gamma_\varepsilon\cap\{(x,\xi)\in\rdd:\ |\xi|\leq R\},
\end{equation*}
and
\begin{equation*}
  \Gamma^{(2)}_{\varepsilon,R}:=
  \Gamma_\varepsilon\cap\{(x,\xi)\in\rdd:\ |\xi|>R\}.
\end{equation*}
We estimate the three integrals separately, always for
$w\in\Gamma_{\varepsilon'}$.

\textit{Integral over $\Gamma_\varepsilon^{\,c}$}. If
$z\in\Gamma_\varepsilon^{\,c}$ and $w\in\Gamma_{\varepsilon'}$,
then
\begin{equation*}
  |z-w|\ge C\,(|z|+|w|)
\end{equation*}
for some $C>0$. Indeed, if $\theta$ denotes the angle between $z$
and $w$ in $\rdd$, we have $\cos\theta <\lambda<1$, since the
intersections of $\overline{\Gamma_{\varepsilon'}}$ and
$\Gamma_\varepsilon^{\,c}$ with the unit sphere in
$\mathbb{R}^{2d}$ are disjoint compact sets, and hence have
positive distance. Hence
\begin{equation*}
  |z-w|^2=|z|^2+|w|^2-2|z||w|\cos\theta
  \ge (1-\lambda)(|z|^2+|w|^2).
\end{equation*}

Therefore, since the function $V_g(\varphi f)$ has at most
polynomial growth by Proposition \ref{pro:growth}, the integral
over $\Gamma_\varepsilon^{\,c}$ can be estimated, for some $k>0$,
by
\begin{equation*}
  C'\int_{\mathbb{R}^{2d}}
  \frac{(1+|w|)^M}{(1+|z|)^{N/2}(1+|w|)^{N/2}}\,
  \bigl(1+|\chi^{-1}(z)|\bigr)^k\,dz
  < C'',
\end{equation*}
provided that $N/2>M$ and $N/2>k+2d$, since $\chi^{-1}$ is
globally Lipschitz.

\textit{Integral over $\Gamma^{(1)}_{\varepsilon,R}$}. The
desired estimate can be obtained similarly; indeed, in this case
$|z|$ is bounded and $1+|z-w|\ge C(1+|w|)$.

\textit{Integral over $\Gamma^{(2)}_{\varepsilon,R}$}. Here we
use \eqref{eq:3star}: if $z=(x,\xi)\in\Gamma_\varepsilon$ and
$|\xi|>R$, then $\chi^{-1}(z)\in\Gamma_\delta^{\,c}$. On the
conic set $\Gamma_\delta^{\,c}$ the function $V_g(\varphi
f)$ has fast decay, since by Proposition \ref{pro:micolocality}
and Example \ref{exa:ex1},
\begin{equation*}
  WF_G(\varphi f)\subset\{0\}\times(\mathbb{R}^d\setminus\{0\})
  \cap
  WF_G(f)\subset\{0\}\times(\mathbb{R}^d\setminus\{0\}).
\end{equation*}
Therefore, the integral over $\Gamma^{(2)}_{\varepsilon,R}$ can
be estimated, for every $N,N'>0$, as
\begin{equation*}
  C_{N,N'}\int_{\mathbb{R}^{2d}}
  \frac{(1+|w|)^M}{(1+|z-w|)^N}\,
  \frac{1}{(1+|\chi^{-1}(z)|)^{N'}}\,dz
\end{equation*}
\begin{equation*}
  \le C'_{N,N'}\int_{\mathbb{R}^{2d}}
  \frac{(1+|w|)^M}{(1+|z-w|)^N}\,
  \frac{1}{(1+|z|)^{N'}}\,dz
  \le C''_{N,N'},
\end{equation*}
provided that $N,N'$ are sufficiently large.

This concludes the proof of the theorem.
\end{proof}
\begin{remark}
  Observe that the operator $\mathbf{1}_E\,A\,\mathbf{1}_F$ in
  Theorem \ref{the:compact-FIO} is compact in $L^2(\mathbb{R}^d)$
  if and only if its adjoint $\mathbf{1}_F\,A^\ast\,\mathbf{1}_E$
  is compact. On the other hand, one easily sees that $A^\ast\in
  FIO(\chi^{-1})$. Hence, in view of Proposition \ref{pro:symm},
  the statement of Theorem \ref{the:compact-FIO} is symmetric
  under the exchange of $\chi$ with $\chi^{-1}$ and $A$ with
  $A^\ast$.
\end{remark}

\section{Compactness of the localized Schr\"{o}dinger propagator}
\label{sec:compactness}

We now apply Theorem \ref{the:compact-FIO} to the propagator for
the class of Schr\"{o}dinger equations in $\rd$ considered by
Yajima \cite{yajima1991}.

Let $s\in\R$, and consider the Cauchy problem in $\R\times\rd$:
\begin{equation*}
  \begin{cases}
    i\partial_t u = Hu,\\
    u(s,x)=f(x),
  \end{cases}
\end{equation*}
where the Hamiltonian $H$ belongs to the following class.

\begin{definition}\label{def:yajima}
  We say that $H$ is a Yajima--type Hamiltonian if it has the form
  \begin{equation*}
    H=\tfrac12\bigl(i\nabla+A(t,x)\bigr)^2+V(t,x),
    \quad t \in \mathbb{R},\ x \in \mathbb{R}^d,
  \end{equation*}
  where $V(t,x)$ and $A(t,x) = (A_1(t,x), \ldots, A_d(t,x))$ are
  the electric scalar and magnetic vector potential of the field,
  satisfying the following hypotheses.

  \begin{itemize}
    \item[(a)]
    For $j = 1, \ldots, d$, $A_j(t,x)$ is a real function of
    $(t,x) \in \mathbb{R} \times \mathbb{R}^d$ and
    $\partial_x^\alpha A_j(t,x)$ is $C^1$ in $(t,x) \in
    \mathbb{R} \times \mathbb{R}^d$, for every $\alpha \in
    \mathbb{N}^d$. Moreover there exists $\varepsilon > 0$ such
    that
    \begin{equation*}
      |\partial_x^\alpha B(t,x)|
      \le C_\alpha (1 + |x|)^{-1-\varepsilon},
      \qquad |\alpha| \ge 1,
    \end{equation*}
    \begin{equation*}
      |\partial_x^\alpha A(t,x)|
      + |\partial_x^\alpha \partial_t A(t,x)|
      \le C_\alpha,
      \qquad |\alpha| \ge 1,
    \end{equation*}
    for $(t,x) \in \mathbb{R} \times \mathbb{R}^d$, where
    $B(t,x)$ is the magnetic field, i.e.\ the skew-symmetric
    matrix with entries
    \begin{equation*}
      B_{j,k}(t,x) =
      \partial_{x_j} A_k(t,x) - \partial_{x_k} A_j(t,x).
    \end{equation*}

    \item[(b)]
    $V(t,x)$ is a real function of $(t,x) \in \mathbb{R} \times
    \mathbb{R}^d$, with $\partial^\alpha_x V(t,x)$ continuous in
    $(t,x) \in \mathbb{R} \times \mathbb{R}^d$ for every $\alpha
    \in \mathbb{N}^d$ and satisfying
    \begin{equation*}
      |\partial_x^\alpha V(t,x)| \le C_\alpha,
      \qquad |\alpha| \ge 2,
      \quad (t,x) \in \mathbb{R} \times \mathbb{R}^d.
    \end{equation*}
  \end{itemize}
\end{definition}

It was proved in \cite[Section 2]{yajima1991} that under these
assumptions the Hamiltonian
\begin{equation*}
  H(x,\xi)=\frac{1}{2}(\xi-A(t,x))^2+V(t,x)
\end{equation*}
generates a flow
\begin{equation*}
  \chi_{t,s}:\mathbb{R}^{2d}\to\mathbb{R}^{2d},
  \qquad t,s\in\mathbb{R},
\end{equation*}
which is globally defined. It is a two-parameter family of tame
canonical transformations, that is, $\partial^\alpha\chi_{t,s}\in
L^\infty$ for $|\alpha|\ge1$. Moreover,
\begin{equation*}
  \chi_{t,s}\circ\chi_{s,u}=\chi_{t,u}
  \qquad \forall\,t,s,u\in\mathbb{R}.
\end{equation*}
One of the main results of \cite{yajima1991} (see \cite[Theorem 4
and Remark (a)]{yajima1991}) states that, if $0<|t-s|<T_0$ with
$T_0$ sufficiently small, the Schr\"{o}dinger propagator $U(t,s)$
is an OIO of the type considered in Section
\ref{sec:ganerating-functions}. The corresponding tame phase
$S(t,s,x,y)$ is
associated with the tame canonical transformation $\chi_{t,s}$,
and the amplitude $a(t,s,x,y)$ belongs to $S^0_{0,0}(\rdd)$,
depending on $t,s$.

\begin{remark}\label{rem:ja}
  These results were proved in \cite{yajima1991} in the absence
  of an electric potential, but it was observed in \cite[Remark
  (a)]{yajima1991} that they extend to Hamiltonians as in
  Definition \ref{def:yajima} by a suitable gauge transformation.
  For the benefit of the reader, we briefly provide the details
  here. If we set
  \begin{equation*}
    u'(t,x)=\exp\Big(i\int_0^t V(\tau,x)\, d\tau\Big)u(t,x),
  \end{equation*}
  then $u'$ satisfies the Schr\"{o}dinger equation with
  potentials
  \begin{equation*}
    A'(t,x):=A(t,x)+\int_0^t \nabla_x V(\tau,x)\, d\tau,
    \qquad V':=0.
  \end{equation*}
  The Hamiltonian flow $\chi_{t,s}$ remains the same after this
  transformation. This means that if
  \begin{equation}\label{eq:new-momenta}
    \xi'=\xi+\int_0^t \nabla_x V(\tau,x)\, d\tau,
    \qquad
    \eta'=\eta+\int_0^s \nabla_y V(\tau,y)\, d\tau
  \end{equation}
  denote the new momenta, we have $(x,\xi')=\chi_{t,s}(y,\eta')$.
  In view of \cite[Theorem 4]{yajima1991}, there exists a
  generating function $S'(t,s,x,y)$, which is a tame phase
  defined for $|t-s|>0$ small enough, $x,y\in\rd$, such that the
  corresponding Schr\"{o}dinger propagator $U'(t,s)$ is an OIO
  with phase $S'$. It is clear that the propagator $U(t,s)$ of
  the original equation is therefore an OIO with the tame phase
  \begin{equation*}
    S(t,s,x,y):=
    S'(t,s,x,y)-\int_0^t V(\tau,x)\, d\tau
    + \int_0^s V(\tau,y)\, d\tau.
  \end{equation*}
  Moreover, $S$ is the generating function for the map
  $(x,\xi)=\chi_{t,s}(y,\eta)$, as one can see from the equations
  \begin{equation*}
    \frac{\partial S'}{\partial y}=-\eta',
    \qquad
    \frac{\partial S'}{\partial x}=\xi',
  \end{equation*}
  and \eqref{eq:new-momenta}.
\end{remark}

By Proposition \ref{pro:CT}, for $0<|t-s|<T_0$, $U(t,s)$
therefore belongs to the class $FIO(\chi_{t,s})$, that is, for
every $N\in\mathbb{N}$ there exists $C_N$ (depending on $s,t$)
such that
\begin{equation}\label{eq:uts}
  \bigl|\langle U(t,s)\pi(z)g,\pi(w)g\rangle\bigr|
  \le C_N\,(1+|w-\chi_{t,s}(z)|)^{-N},
  \quad \forall z,w,\in\rdd,
\end{equation}
The constant $C_N$ is, in fact, uniform with respect to $t,s$ if
$|t-s|<T_0$, but we do not need this fact.

If $|t-s|$ is large, $U(t,s)$ generally cannot be written in the
form of an OIO. However, using the group property of $U(t,s)$ and
$\chi_{t,s}$, together with Proposition \ref{pro:composition}
repeatedly, we see that \eqref{eq:uts} continues to hold for
every $s,t\in\R$, with a constant $C_N$ depending on $s,t$. We
summarize this conclusion in the following proposition.

\begin{proposition}\label{pro:uts-bis}
  Let $H$ be a Yajima-type Hamiltonian. For every $s,t\in\R$, the
  corresponding propagator $U(t,s)$ belongs to the class
  $FIO(\chi_{t,s})$.
\end{proposition}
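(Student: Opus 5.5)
The plan is to reduce an arbitrary pair of times to the short-time case and then compose. Fix $s,t\in\R$ and set $T_0>0$ to be the short-time threshold from Yajima's theorem, as recalled above (via Remark \ref{rem:ja} in the presence of an electric potential). If $t=s$, then $U(s,s)=I$ and $\chi_{s,s}=\mathrm{id}$. In that case the claim is the estimate $|\langle \pi(z)g,\pi(w)g\rangle|=|V_g g(w-z)|\le C_N(1+|w-z|)^{-N}$, which holds because $V_g g\in\mathcal{S}(\rdd)$ for $g\in\cSrd$. If $0<|t-s|<T_0$, then $U(t,s)$ is an OIO with tame phase $S(t,s,\cdot,\cdot)$ generating $\chi_{t,s}$ and amplitude in $S^0_{0,0}$. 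Proposition \ref{pro:CT} then gives $U(t,s)\in FIO(\chi_{t,s})$ directly.

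For general $s<t$ (the case $t<s$ is identical), I would choose an integer $n$ with $(t-s)/n<T_0$ and form the partition $t_j=s+j(t-s)/n$ for $j=0,\dots,n$. By the evolution (group) property of the propagator,
\begin{equation*}
  U(t,s)=U(t_n,t_{n-1})\,U(t_{n-1},t_{n-2})\cdots U(t_1,t_0),
\end{equation*}
and each factor lies in $FIO(\chi_{t_j,t_{j-1}})$ by the short-time case. Each $\chi_{t_j,t_{j-1}}$ is a tame canonical transformation, by Yajima's result on the flow. Applying Proposition \ref{pro:composition} inductively $n-1$ times, together with the closedness of tame canonical transformations under composition, I obtain
\begin{equation*}
  U(t,s)\in FIO(\chi_{t_n,t_{n-1}}\circ\cdots\circ\chi_{t_1,t_0})=FIO(\chi_{t,s}),
\end{equation*}
where the last equality is the cocycle identity $\chi_{t,u}\circ\chi_{u,s}=\chi_{t,s}$.

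The main point needing care is that the factorization is an identity of operators on $\cSrd$ with compatible domains. Each $U(t_j,t_{j-1})$ is unitary on $\ltrd$ and, being in some $FIO(\chi)$, maps $\cSrd$ continuously into itself. The product is therefore well defined on $\cSrd$ and agrees there with $U(t,s)$, which is enough because membership in $FIO(\chi)$ is tested on $\pi(z)g\in\cSrd$. A second check is that the propagator $U(t,s)$ of Remark \ref{rem:ja} is exactly the gauge-conjugated Yajima propagator, so the group law holds for it. This follows because the gauge factors $e^{\pm i\int_0^t V}$ are pointwise multipliers that cancel telescopically in the product. Beyond these checks the argument is purely formal, and the constants $C_N$ simply depend on $n$, hence on $s$ and $t$.
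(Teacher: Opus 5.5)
Your proposal is correct and follows essentially the same route as the paper: the short-time OIO representation from Yajima's theorem (via the gauge of Remark \ref{rem:ja}) combined with Proposition \ref{pro:CT}, then the group property of $U(t,s)$ and $\chi_{t,s}$ together with repeated use of Proposition \ref{pro:composition}. Your separate treatment of $t=s$ and your checks on domains and on the group law for the gauge-conjugated propagator make explicit details that the paper leaves implicit.
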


We are in the condition to apply Theorem \ref{the:compact-FIO} to
$U(t,s)$. For fixed $s,t\in\R$, the relevant hypothesis is the
vertical nonrefocusing condition for the Hamiltonian flow:
\begin{equation}\label{eq:2star-bis}
  \begin{aligned}
    &\text{There exists an open conic neighborhood }
    \Gamma\subset \rdd\setminus\{0\}
    \text{ of }\{0\}\times(\mathbb{R}^d\setminus\{0\}) \\
    &\text{and $R>0$ such that } \eta\in\rd,\ |\eta|>R
    \;\Rightarrow\; \chi_{t,s}(0,\eta)\not \in \Gamma.
  \end{aligned}
\end{equation}
Hence, the bicharacteristics that start at time $s$ at points of
the form $(0,\eta)$ with $|\eta|$ large enough, at time $t$ must
lie outside a conic open neighborhood of
$\{0\}\times(\mathbb{R}^d\setminus\{0\})$.

\begin{theorem}[Compactness for localized Schr\"{o}dinger
propagators]
\label{the:Propagator-Compactness}
  Consider a Yajima-type Hamiltonian and let $U(t,s)$ be the
  corresponding Schr\"{o}dinger propagator. With the above
  notation, let $s,t\in \R$ be such that the vertical
  nonrefocusing condition \eqref{eq:2star-bis} is satisfied.
  Then, if $E,F\subset\mathbb{R}^d$ are compact subsets, the
  operator $\mathbf{1}_E\,U(t,s)\,\mathbf{1}_F$ is compact in
  $L^2(\mathbb{R}^d)$.
\end{theorem}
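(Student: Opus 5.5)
The plan is to read this statement as a direct specialization of Theorem \ref{the:compact-FIO} to $A=U(t,s)$ and $\chi=\chi_{t,s}$. That theorem needs three inputs, and I will check them one at a time.

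First, $\chi_{t,s}$ must be a tame canonical transformation. This is part of Yajima's construction recalled above: the flow of $H(x,\xi)=\frac12(\xi-A(t,x))^2+V(t,x)$ is globally defined, symplectic, and has $\partial^\alpha\chi_{t,s}\in L^\infty$ for $|\alpha|\ge1$. Second, we need $U(t,s)\in FIO(\chi_{t,s})$ for the fixed pair $(t,s)$. This is Proposition \ref{pro:uts-bis}. Third, the vertical nonrefocusing condition \eqref{eq:2star} for $\chi=\chi_{t,s}$ is exactly the hypothesis \eqref{eq:2star-bis}.

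With these three inputs in place, Theorem \ref{the:compact-FIO} applied to the compact sets $E,F$ gives that $\mathbf{1}_E\,U(t,s)\,\mathbf{1}_F$ is compact on $L^2(\mathbb{R}^d)$.

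The only point with real content is the second input when $|t-s|\ge T_0$, so I would make the argument behind Proposition \ref{pro:uts-bis} explicit. If $t=s$, then $U(t,t)=I$, and $I\in FIO(\mathrm{id})$ because $|\langle\pi(z)g,\pi(w)g\rangle|=|V_gg(w-z)|$ decays rapidly. Otherwise, I subdivide $[s,t]$ as $s=t_0<t_1<\dots<t_m=t$ with $0<|t_{j+1}-t_j|<T_0$.

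On each short step, Remark \ref{rem:ja} and Yajima's theorem show that $U(t_{j+1},t_j)$ is an OIO with tame phase generating $\chi_{t_{j+1},t_j}$. Proposition \ref{pro:CT} then places it in $FIO(\chi_{t_{j+1},t_j})$. Next I use the propagator identity $U(t,s)=U(t_m,t_{m-1})\cdots U(t_1,t_0)$ and the flow identity $\chi_{t,s}=\chi_{t_m,t_{m-1}}\circ\dots\circ\chi_{t_1,t_0}$. Iterating Proposition \ref{pro:composition} gives $U(t,s)\in FIO(\chi_{t,s})$. This step also uses that compositions of tame canonical transformations are tame.

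I expect no genuine obstacle, since all the analysis is contained in Theorem \ref{the:compact-FIO}. The one thing to double-check is a convention: the orientation $(x,\xi)=\chi_{t,s}(y,\eta)$ must match the estimate \eqref{eq:1star}, with $z$ the input point and $w$ the output point. Even if the orientation were reversed, Proposition \ref{pro:symm} makes the nonrefocusing hypothesis insensitive to replacing $\chi_{t,s}$ by $\chi_{s,t}=\chi_{t,s}^{-1}$, so the conclusion would be unaffected.
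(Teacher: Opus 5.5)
Your proposal is correct and matches the paper's proof, which simply combines Proposition~\ref{pro:uts-bis} with Theorem~\ref{the:compact-FIO}. Your subdivision of $[s,t]$ into short steps, combined with Proposition~\ref{pro:CT} and Proposition~\ref{pro:composition}, spells out the justification the paper sketches just before Proposition~\ref{pro:uts-bis}.
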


\begin{proof}
The desired conclusion follows by combining Proposition
\ref{pro:uts-bis} and Theorem \ref{the:compact-FIO}.
\end{proof}


\begin{example}\label{exa:hamiltonian-examples}
Let us specialize the previous results to some interesting
cases.
\begin{itemize}
\item[(a)]
In the case of the quantum harmonic oscillator, $H$ is the Weyl
quantization of the symbol $\tfrac12(x^2+\xi^2)$, and one has
\begin{equation}\label{eq:chi-harmonic}
  \chi_{t,s}(y,\eta)
  =
  \begin{pmatrix}
    \cos(t-s) & \sin(t-s) \\
    -\sin(t-s) & \cos(t-s)
  \end{pmatrix}
  \begin{pmatrix}
    y \\ \eta
  \end{pmatrix}.
\end{equation}
The vertical nonrefocusing condition \eqref{eq:2star-bis} is
satisfied, provided that $t-s\neq k\pi$, $k\in\mathbb{Z}$; see
Remark \ref{rem:linear}.

\item[(b)] Consider a perturbed quantum harmonic oscillator, of
the type
\begin{equation*}
  H(x,\xi)=\tfrac12\xi^2+\tfrac12x^2+V(t,x).
\end{equation*}
with $V(t,x)$ satisfying the assumption in Definition
\ref{def:yajima} (b) and the additional condition
\begin{equation}
\label{eq:AddCondV}
  \lim_{|x|\to +\infty}(1+|x|)^{-1}
  \sup_{t\in\mathbb{R}}|\nabla_x V(t,x)|=0.
\end{equation}
Then, we now verify that the vertical nonrefocusing condition
\eqref{eq:2star-bis} is again satisfied, provided that $t-s\neq
k\pi$, $k\in\mathbb{Z}$.

Let $\chi_{t,s}$ and $\chi'_{t,s}$ be the Hamiltonian flows
associated with
\begin{equation*}
  H_0(x,\xi):=\frac12\bigl(\xi^2+x^2\bigr)
  \quad\text{and}\quad
  H'(t,x,\xi):=\frac12\bigl(\xi^2+x^2\bigr)+V(t,x).
\end{equation*}
Then, for $z\in\mathbb{R}^{2d}$, with $J$ as in \eqref{eq:J},
\begin{equation*}
  \frac{d}{dt}\chi_{t,s}(z)=J\nabla H_0\bigl(\chi_{t,s}(z)\bigr)
\end{equation*}
and
\begin{equation*}
  \frac{d}{dt}\chi'_{t,s}(z)=J\nabla H'(t,\chi'_{t,s}(z)),
\end{equation*}
with $\chi_{s,s}(z)=\chi'_{s,s}(z)=z$. Therefore,
\begin{equation*}
  \begin{aligned}
    \bigl|\chi_{t,s}(z)-\chi'_{t,s}(z)\bigr|
    &\le \bigl|\int_s^t
    \bigl|J\nabla H_0(\chi_{\tau,s}(z))
    -J\nabla H_0(\chi'_{\tau,s}(z))\bigr|\,d\tau \bigl|\\
    &\quad
    + \bigl|\int_s^t
    \bigl|J\nabla(H'-H_0)(\tau,\chi'_{\tau,s}(z))\bigr|\,
    d\tau\bigl|.
  \end{aligned}
\end{equation*}
Observe that the assumption on $V$ implies that, with
$w=(x,\xi)\in \R^d\times\R^d$,
\begin{align*}
  \bigl|J\nabla_w(H'-H_0)(t,w)\bigr|
  &\leq
  (1+|x|)^{-1}
  \sup_{\tau\in\mathbb{R}}|\nabla_x V(\tau,x)|
  \frac{1+|x|}{1+|w|}(1+|w|)
  \\
  &= \varphi(w)\,(1+|w|)
  \qquad \forall t\in\mathbb{R},\ w\in\mathbb{R}^{2d},
\end{align*}
for a function $\varphi>0$ such that $\varphi(w)\to 0$ as
$|w|\to+\infty$.

On the other hand, for every $T>0$ there exists a constant $C>0$
such that, for $t,s\in[-T,T]$ and $z\in\mathbb{R}^{2d}$,
\begin{equation*}
  C^{-1}(1+|z|)\le |\chi'_{t,s}(z)|\le C(1+|z|),
\end{equation*}
which implies
\begin{equation*}
  \bigl|J\nabla(H'-H_0)(t,\chi'_{t,s}(z))\bigr|
  \le C\,\tilde{\varphi}(z)\,(1+|z|)
\end{equation*}
for a new constant $C>0$, where
\begin{equation*}
  \tilde{\varphi}(z):=
  \sup_{t,s\in[-T,T]}\varphi\bigl(\chi'_{t,s}(z)\bigr)
  \longrightarrow 0
  \qquad \text{as } |z|\to+\infty.
\end{equation*}
Since $J\nabla H_0$ is a linear map, for another constant $C>0$
and for $t,s\in[-T,T]$, $z\in\mathbb{R}^{2d}$,
\begin{equation*}
  \bigl|\chi_{t,s}(z)-\chi'_{t,s}(z)\bigr|
  \le C\bigl| \int_s^t
  \bigl|\chi_{\tau,s}(z)-\chi'_{\tau,s}(z)\bigr|\,d\tau\bigr|
  + C\,\tilde{\varphi}(z)\,(1+|z|).
\end{equation*}
By Gronwall's inequality, we obtain
\begin{equation*}
  \bigl|\chi_{t,s}(z)-\chi'_{t,s}(z)\bigr|
  \le C\,\tilde{\varphi}(z)\,(1+|z|),
\end{equation*}
and the desired conclusion then follows from Proposition
\ref{pro:pertur}.

\item[(a')]
Example (a) in this section can be generalized by considering
Weyl quantizations of more general quadratic Hamiltonians
\begin{equation}
\label{eq:quadHam}
  H = 1/2 A \xi \cdot \xi + B x \cdot \xi + 1/2 Cx \cdot x,
\end{equation}
for $A,B,C \in \mathbb{R}^{d \times d}$, with $A = A^T, C = C^T$.
Indeed, one has
\begin{equation}\label{eq:chi-quadHam}
  \chi_{t,s}(y,\eta)
  =
  \begin{pmatrix}
    A_{t-s} & B_{t-s} \\
    C_{t-s} & D_{t-s}
  \end{pmatrix}
  \begin{pmatrix}
    y \\ \eta
  \end{pmatrix},
\end{equation}
where 
\begin{equation*}
    \begin{pmatrix}
    A_{t-s} & B_{t-s} \\
    C_{t-s} & D_{t-s}
  \end{pmatrix} = e^{(t-s) \mathbb{S}} \in Sp(d,\mathbb{R}), \qquad \mathbb{S} = \begin{pmatrix}
    B & A \\
    -C & -B^T
  \end{pmatrix}.
\end{equation*}
The vertical nonrefocusing condition \eqref{eq:2star-bis} is
satisfied when $\det(B_{t-s}) \neq 0$. There are two scenarios: either $\det(B_{t-s}) \equiv 0$, or the
exceptional times where $\det (B_{t-s}) = 0$ form a discrete set (see \cite[Section 4.3 and Section
6.1]{NicolaTrapasso2022}).

We show a concrete example of a Hamiltonian in
dimension $d=2$, more general than the one in
\eqref{eq:chi-harmonic}, for which the exceptional times form a discrete set. Consider the Weyl quantization the symbol
$\frac{1}{2}|\xi + (-x_2,x_1)|^2+\frac{1}{8}|x|^2$, i.e. a Hamiltonian of the form \eqref{eq:quadHam}, where
\begin{equation*}
  A = \begin{pmatrix} 1 & 0 \\ 0 & 1 \end{pmatrix},
  \quad
  B = \begin{pmatrix} 0 & -1 \\ 1 & 0 \end{pmatrix},
  \quad
  C = \begin{pmatrix} \frac{5}{4} & 0 \\ 0 & \frac{5}{4} \end{pmatrix}.
\end{equation*}
Explicit calculations show that
\begin{equation*}
  B_{t,s} =
  \left(
  \begin{array}{ c | c }
    \alpha_{t-s} & \beta_{t-s} \\
    \hline
    -\beta_{t-s} & \alpha_{t-s}
  \end{array}
  \right)
\end{equation*}
where
\begin{align*}
  \alpha_{\tau}
  &= \frac{\gamma}{8 \sqrt{5}}
  \left[ \sin\left( \frac{\tau \gamma}{2}\right)
  + \sin\left(\frac{\tau}{2 \gamma}\right) \right] \\
  &\quad
  - \frac{1}{8 \gamma \sqrt{5}}
  \left[ \sin\left( \frac{\tau \gamma}{2}\right)
  + \sin\left(\frac{\tau}{2 \gamma}\right) \right],
\end{align*}

\begin{equation*}
  \beta_{\tau} =
  \frac{1}{\sqrt{5}}
  \left[
  \cos\left(\frac{\tau \gamma}{2}\right)
  - \cos\left( \frac{\tau}{2 \gamma}\right)
  \right],
\end{equation*}
\begin{equation*}
  \gamma = \sqrt{9+4\sqrt{5}}.
\end{equation*}

Notice that $\det(B_{t-s}) = \alpha^2_{t-s} + \beta^2_{t-s} \neq
0$ if and only if $t-s \neq 2 k \pi (1 - \frac{2}{\sqrt{5}})
\gamma$, $k \in \mathbb{Z}$.

\item[(b')]
One can extend Example (a') to the quantum Hamiltonian obtained by
Weyl quantization of $H(x,\xi) + V(x,t)$, where $H(x,\xi)$ is a quadratic Hamiltonian
as introduced in Example (a'), $V(x,t)$ is a
real-valued potential satisfying again the assumption in
Definition \ref{def:yajima} (b) and the additional condition
\eqref{eq:AddCondV}. 
Indeed, as in Example (b), by
means of Proposition \ref{pro:pertur}, this new Hamiltonian
satisfies the vertical nonrefocusing condition
\eqref{eq:2star-bis} for the same non-exceptional times of Example (a'), i.e. when $t - s$ is such that
$\det(B_{t-s}) \neq 0$.

\item[(c)]
The results of this section could likewise be proved for
Hamiltonians defined via the Weyl quantization of a function
$a(t,x,\xi)$, with $a\in
C^0(\mathbb{R}\times\mathbb{R}^{2d};\mathbb{R})$, satisfying
\begin{equation*}
  |\partial_x^\alpha\partial_\xi^\beta a(t,x,\xi)|
  \le C_{\alpha,\beta}
  \qquad \text{for }|\alpha|+|\beta|\ge 2,\ x,\xi\in\rd.
\end{equation*}
Indeed, in \cite{tataru2004}, without using any short-time
integral representation as an OIO, it was shown that the
corresponding propagator $U(t,s)$ belongs to the class
$FIO(\chi_{t,s})$. However, observe that the Yajima-type
Hamiltonians in Definition \ref{def:yajima} do not fall in this
class, in general.

\item[(d)]
Similar conclusions could be proved, with the same technique, for
generalized metaplectic operators \cite{CGNR2014}.
\end{itemize}
\end{example}

The regularity assumptions on the electrostatic potential can be weakened. This is the content of the following
corollary.

\begin{corollary}
\label{cor:Corollary-PropComp}
  Consider a Hamiltonian $H = H_0 + W(x)$, where $H_0 =
  \frac{1}{2}(i \nabla +A(x))^2+V(x)$ is of Yajima--type, while $W
  \in C^1(\R^d;\R)$ with
  \begin{equation}
  \label{eq:assumptions-C1-potential}
    \nabla W\in L^\infty.
  \end{equation}
  Let $U_{W}(t,s)$ be the corresponding Schr\"{o}dinger
  propagator, and let $s,t\in \R$ be such that the vertical
  nonrefocusing condition \eqref{eq:2star-bis} is satisfied with
  respect to the free Hamiltonian $H_0$. Then, if
  $E,F\subset\mathbb{R}^d$ are compact subsets, the operator
  $\mathbf{1}_E\,U_W(t,s)\,\mathbf{1}_F$ is compact in
  $L^2(\mathbb{R}^d)$.
\end{corollary}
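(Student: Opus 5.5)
The plan is a Duhamel perturbation argument around $U_0(t,s)$, the propagator of $H_0$. By Theorem \ref{the:Propagator-Compactness} we already know that $\mathbf{1}_E U_0(t,s)\mathbf{1}_F$ is compact. So it suffices to show that $\mathbf{1}_E(U_W(t,s)-U_0(t,s))\mathbf{1}_F$ is compact.

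\textbf{Step 1: a gauge-type conjugation.} Since $W$ is only $C^1$, it is not admissible in Definition \ref{def:yajima}(b). We therefore cannot simply absorb it into $V$. Instead, use $\nabla W\in L^\infty$ to regularize $W$. Take $W_\epsilon=W*\rho_\epsilon$. Then $W-W_\epsilon$ is bounded, since $|W(x)-W_\epsilon(x)|\le \epsilon\|\nabla W\|_\infty$. Also $W_\epsilon$ is smooth with $\nabla W_\epsilon$ bounded, but its higher derivatives are only bounded by constants depending on $\epsilon$, which is enough. In fact, $\partial^\alpha W_\epsilon = (\partial^{\alpha-e_j}\rho_\epsilon)*\partial_jW$ is bounded for $|\alpha|\ge1$. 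Hence $H_0+W_\epsilon$ is again of Yajima type. Its force differs from that of $H_0$ by the bounded vector field $\nabla W_\epsilon$.

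\textbf{Step 2: the regularized flow.} A Gronwall argument, as in Example \ref{exa:hamiltonian-examples}(b), shows that the flows of $H_0$ and $H_0+W_\epsilon$ differ by at most $C|t-s|$ uniformly in $z$. This is a sublinear perturbation. By Proposition \ref{pro:pertur}, \eqref{eq:2star-bis} holds for the flow of $H_0+W_\epsilon$ at the same $(t,s)$. Theorem \ref{the:Propagator-Compactness} then gives compactness of $\mathbf{1}_E U_{W_\epsilon}(t,s)\mathbf{1}_F$.

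\textbf{Step 3: the bounded remainder.} Write $q=W-W_\epsilon\in L^\infty$. Duhamel's formula gives
\[
U_W(t,s)-U_{W_\epsilon}(t,s)=-i\int_s^t U_{W}(t,\tau)\,q\,U_{W_\epsilon}(\tau,s)\,d\tau .
\]
This has operator norm at most $|t-s|\,\|q\|_\infty\le |t-s|\,\epsilon\|\nabla W\|_\infty$, because both propagators are unitary. Consequently $\mathbf{1}_E U_W(t,s)\mathbf{1}_F$ is a norm limit, as $\epsilon\to0$, of the compact operators $\mathbf{1}_E U_{W_\epsilon}(t,s)\mathbf{1}_F$, and is therefore compact.

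\textbf{Main obstacle.} The delicate points are rigorous rather than structural. First, the propagator $U_W$ must exist as a unitary family and satisfy the Duhamel identity. This holds because $H_0+W$ is a bounded perturbation of $H_0+W_\epsilon$, so standard bounded-perturbation theory for time-dependent unitary propagators applies. Second, the Gronwall step must be checked with the linear growth allowed in the Yajima class. The difference of the vector fields $J\nabla H$ along the two trajectories is controlled by the global Lipschitz bound on $J\nabla H_0$, which follows from the bounded second derivatives, together with the bound $\|\nabla W_\epsilon\|_\infty\le\|\nabla W\|_\infty$. Gronwall then yields a uniform bound $C_T$ on the flow difference, so the hypothesis of Proposition \ref{pro:pertur} holds trivially.
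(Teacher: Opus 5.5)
Your route is the same as the paper's: mollify $W$, note that $H_0+W_\epsilon$ is of Yajima type, transfer \eqref{eq:2star-bis} to its flow through Proposition \ref{pro:pertur}, and use Duhamel's formula to write $\mathbf{1}_E U_W(t,s)\mathbf{1}_F$ as a norm limit of the compact operators $\mathbf{1}_E U_{W_\epsilon}(t,s)\mathbf{1}_F$. Several of your steps are correct as written: the derivative bounds for $W_\epsilon$, the Duhamel identity with its sign and norm bound, and the bounded-perturbation argument for $H_0+W$. Your opening reduction to $\mathbf{1}_E(U_W-U_0)\mathbf{1}_F$ is never used, and that difference is not small since $W$ may grow linearly, so drop it.

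The step that fails as written is the justification in Step 2 and in your ``main obstacle'' paragraph. For a Yajima-type $H_0$ the field $J\nabla H_0$ is in general \emph{not} globally Lipschitz, because
\[
\partial_{x_k}\partial_{x_l}H_0(x,\xi)=\sum_j \partial_kA_j\,\partial_lA_j-\sum_j\bigl(\xi_j-A_j(x)\bigr)\partial_k\partial_lA_j(x)+\partial_k\partial_lV(x)
\]
grows linearly in $\xi$ as soon as $\partial^2A\not\equiv0$. This happens already for a pure gauge such as $A=(\cos x_1,0,\dots,0)$. It is exactly why Yajima's class is not contained in the class of Example \ref{exa:hamiltonian-examples}(c). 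It is also why the Gronwall argument of Example \ref{exa:hamiltonian-examples}(b), which uses linearity of $J\nabla H_0$, does not carry over.

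The repair is to use tameness of the \emph{flow} rather than of the vector field. Let $\chi_{t,s}$ and $\chi^\epsilon_{t,s}$ be the flows of $H_0$ and $H_0+W_\epsilon$. Differentiating $\tau\mapsto\chi_{t,\tau}(\chi^\epsilon_{\tau,s}(z))$ gives
\[
\chi^\epsilon_{t,s}(z)-\chi_{t,s}(z)=\int_s^t D\chi_{t,\tau}\bigl(\chi^\epsilon_{\tau,s}(z)\bigr)\,J\nabla W_\epsilon\bigl(\chi^\epsilon_{\tau,s}(z)\bigr)\,d\tau .
\]
The derivative $D\chi_{t,\tau}$ is bounded uniformly for $\tau$ between $s$ and $t$, by Yajima's flow estimates or by the short-time bounds combined with the group law. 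Together with $|\nabla W_\epsilon|\le\|\nabla W\|_\infty$, this gives $|\chi^\epsilon_{t,s}(z)-\chi_{t,s}(z)|\le C|t-s|\,\|\nabla W\|_\infty$ uniformly in $z$ and $\epsilon$. That is more than Proposition \ref{pro:pertur} requires. The paper itself only points to Examples (b) and (b') at this step, where $H_0$ is quadratic, so this is the detail that genuinely has to be supplied.
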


\begin{proof}
The Hamiltonian is time-independent, hence
$U_W(t,s)=U_{W}(t-s,0)$; we fix $s=0$. Define $W_{\epsilon} =
\rho_{\epsilon} *W$, where
$\rho_{\epsilon}(x)=\epsilon^{-d}\rho(x/\epsilon)$ is the usual
Friedrichs mollifier, the function $\rho$ being positive and
supported in $B^d_{1}(0)$. 

We notice that 
\begin{equation*}
  |W_{\epsilon}(x)- W(x)|
  \leq \int_{\mathbb{R}^d} \rho_{\epsilon}(y)
  |W(x-y) - W(x)| dy
  \lesssim_{d} \epsilon |\nabla W(x)|
  \lesssim \epsilon.
\end{equation*}
Thus $W_{\epsilon} \rightarrow W$ uniformly as $\epsilon
\rightarrow 0$. Moreover, by standard properties of convolutions
and \eqref{eq:assumptions-C1-potential}, $W_{\epsilon} \in
C^{\infty}(\R^d;\R)$, and
\begin{equation*}
  |\partial^{\alpha} W_{\epsilon}| \lesssim_{\epsilon} 1,
  \qquad \forall |\alpha| \geq 1.
\end{equation*}
We therefore see that $H_{\epsilon} = H_0 + W_{\epsilon}$ is a
Yajima--type Hamiltonian. By Duhamel's principle, we can write
  \begin{equation}
  \label{eq:Duhamel}
    \textstyle
    \mathbf{1}_{E}U_{W}(t,0) \mathbf{1}_{F}=
     \mathbf{1}_{E}U_{W_{\epsilon}}(t,0) \mathbf{1}_{F}
    -i\int_{0}^{t}\mathbf{1}_{E}U_{W_{\epsilon}}(t-\tau,0)
      (H-H_{\epsilon})U_{W}(\tau,0)\mathbf{1}_{F}d \tau.
  \end{equation}
  Notice that if $(t,0)$ satisfies the vertical nonrefocusing
  condition \eqref{eq:2star-bis} with respect to $H_0$, the same
  holds with respect to $H_{\epsilon}$, by the robustness
  argument in Proposition \ref{pro:pertur} (see also Example \ref{exa:hamiltonian-examples} (b) and (b')). Since $H -
  H_{\epsilon} = W - W_{\epsilon} \rightarrow 0$ uniformly as
  $\epsilon \rightarrow 0$, we deduce by formula
  \eqref{eq:Duhamel} that
  $\mathbf{1}_{E}U_{W}(t,0) \mathbf{1}_{F}$ is the
  limit for $\epsilon \rightarrow 0$ in the operator norm of
  $\mathbf{1}_{E}U_{W_{\epsilon}}(t,0) \mathbf{1}_{F}$, which is
  a sequence of compact operators by Theorem
  \ref{the:Propagator-Compactness}. This ends the proof.
\end{proof}

\begin{remark}
\label{rem approxPot}
One can handle a larger class of electrostatic perturbations by
a simple approximation procedure: the previous results hold
for Hamiltonians of the form
\begin{equation*}
  H=H_0+W(x),
\end{equation*}
where $H_0$ is of Yajima--type and accounts for the regular part
with at most quadratic electric growth, while $W$ lies in the
uniform closure of the class of potentials $W_0$
which are smooth, Yajima--type and satisfy the additional condition
\begin{equation*}
  \lim_{|x|\to +\infty}(1+|x|)^{-1}
  |\nabla_x W_0(x)|=0.
\end{equation*}
\end{remark}

\section{Proof of the main results}
\label{sec:main-results}

In this section, we state and prove the main results of the
paper.

Here and in the following, we will consider time-independent
Hamiltonians $H = H_0 + W(x)$, where $H_0 = \frac{1}{2}( i \nabla
+ A(x) )^2 + V(x)$ is of Yajima--type, i.e.\ as in Definition
\ref{def:yajima}, and $W(x) \in C^1(\R^d ; \R)$ satisfies
\eqref{eq:assumptions-C1-potential}, or more generally the conditions in Remark \ref{rem approxPot}. We also assume that $V + W
\geq -m^2$, for some $m \in \R$. Let $U(t,0) = e^{-i t H}$, $t
\in \R$, be the corresponding Schr\"{o}dinger propagator at time
$t$. Denote by $u(t) = e^{- i t H} u_0$, for $u_0 \in L^2(\R^d)$,
the solution of the Schr\"{o}dinger i.v.p. $i \partial_t u = H
u$, $u(0)=u_0$.

\begin{theorem}[Dynamical Amrein--Berthier inequality]
\label{the:dynAB}
  Let $0 \neq T \in \R$ be such that the vertical nonrefocusing
  condition \eqref{eq:2star-bis} is satisfied with $s=0$, $t =
  T$. Then, for any pair of compact sets
  $E,F\subset\mathbb{R}^d$, the following dynamical
  Amrein--Berthier inequality holds
  \begin{equation}
  \label{eq:dynAB}
    \|u(t)\|_{L^{2}}\le
    C \bigl( \|u(0)\|_{L^{2}(E^{c})}
    +\|u(T)\|_{L^{2}(F^{c})} \bigr),
    \qquad \forall t \in \R,
  \end{equation}
  for a constant $C = C(E,F,T,A,V,d)$.
\end{theorem}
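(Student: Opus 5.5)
We will follow the strategy of \cite{DanconaFiorletta2026}, reducing \eqref{eq:dynAB} to the strict norm bound $\|L\|<1$ for the localized operator $L:=\mathbf{1}_F\,e^{-iTH}\,\mathbf{1}_E$.

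\emph{Reduction to the norm bound.} Since $e^{-itH}$ is unitary, $\|u(t)\|=\|u_0\|$ for every $t$, so it suffices to bound $\|u_0\|$. Write $u_0=\mathbf{1}_E u_0+\mathbf{1}_{E^c}u_0$. Then
\begin{equation*}
\|\mathbf{1}_F u(T)\|\le \|L u_0\|+\|\mathbf{1}_{E^c}u_0\|,
\end{equation*}
and hence
\begin{equation*}
\|u_0\|\le \|\mathbf{1}_F u(T)\|+\|\mathbf{1}_{F^c}u(T)\|\le \|L\|\,\|u_0\|+\|u_0\|_{L^2(E^c)}+\|u(T)\|_{L^2(F^c)}.
\end{equation*}
So once $\|L\|<1$ is known, \eqref{eq:dynAB} follows with $C=(1-\|L\|)^{-1}$.

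\emph{Compactness.} By Corollary \ref{cor:Corollary-PropComp} (or Remark \ref{rem approxPot}), with $s=0$ and $t=T$, the operator $L$ is compact. Therefore $L^*L$ is compact, selfadjoint and nonnegative. If $\|L\|=1$, the value $1$ is an eigenvalue of $L^*L$ with a finite dimensional eigenspace $\mathcal{N}$. For $\phi\in\mathcal{N}$ we have $\|L\phi\|=\|\phi\|$, which forces $\phi=\mathbf{1}_E\phi$ and $e^{-iTH}\phi=\mathbf{1}_F e^{-iTH}\phi$. Thus $\mathcal{N}$ consists exactly of the $L^2$ functions supported in $E$ whose evolution at time $T$ is supported in $F$.

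\emph{Main step: showing $\mathcal{N}=\{0\}$.} This is where the real work lies. We will exploit the finite dimensionality of $\mathcal{N}$ to obtain an invariant subspace on which a polynomial identity can be extracted. The plan is to show that $\mathcal{N}$ is invariant under suitable bounded functions of $H$, such as $e^{-i\tau\sqrt{H+m^2+1}}$ or $\cos(\tau\sqrt{H+m^2})$ for small $\tau$; the assumption $V+W\ge -m^2$ makes these well defined. Finite propagation speed for the associated magnetic wave equation shows that $\cos(\tau\sqrt{H+m^2})\phi$ is supported in the $|\tau|$-neighbourhood of $E$. Intersecting over enlarged sets $E_\delta$, $F_\delta$ and letting $\delta\to0$, where the compactness of $L$ gives upper semicontinuity of the dimension, we obtain a nested family of finite dimensional spaces. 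Burnside's theorem applied to the algebra generated by these operators restricted to a finite dimensional invariant space then yields a common eigenvector. In particular we get some $0\neq\phi\in L^2$, compactly supported, with $\phi\in D(H)$ and $H\phi=\lambda\phi$ on $\mathbb{R}^d$. Since $\phi$ vanishes on the open set $\mathbb{R}^d\setminus E_\delta$, unique continuation for magnetic Schr\"odinger operators with $C^1$ potentials forces $\phi\equiv0$. This contradicts $\phi\neq0$, so $\mathcal{N}=\{0\}$ and therefore $\|L\|<1$.

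The delicate points are the invariance argument that produces the eigenvector, and checking that the regularity in Definition \ref{def:yajima}, together with $W\in C^1$ and $\nabla W\in L^\infty$, suffices both for finite propagation speed and for the unique continuation theorem. For these we would invoke the corresponding lemmas of \cite{DanconaFiorletta2026}, since they depend only on local regularity and not on the growth at infinity.
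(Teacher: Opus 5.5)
Your proposal is correct and follows essentially the paper's route. Your $L^{*}L=\mathbf{1}_E e^{iTH}\mathbf{1}_F e^{-iTH}\mathbf{1}_E$ has the same $1$-eigenspace as the paper's $S$ (and $\|L\|=\|S\|$), and your shrinking compact neighbourhoods $E_\delta,F_\delta$ play the role of the paper's ball $B_R(0)$: compactness of each $\mathbf{1}_{F_\delta}e^{-iTH}\mathbf{1}_{E_\delta}$ stabilizes the nested finite dimensional spaces, which gives invariance under $\cos(\tau\sqrt{H+m^2})$ via finite propagation speed, and the Burnside and unique continuation steps are the same. The only slip is the alternative $e^{-i\tau\sqrt{H+m^2+1}}$: the half-wave group does not preserve compact supports, so the invariance argument must use the cosine family, as you in fact do.
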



\begin{remark}
  The aim of Theorem \ref{the:dynAB} is finding times $T \neq 0$
  for which inequality \eqref{eq:dynAB}, proved in Theorem 1.1 in
  \cite{DanconaFiorletta2026} for sublinear magnetic potentials
  and subquadratic electrostatic potentials, extends to the limit
  case in which the magnetic potential grows linearly and the
  electrostatic potential grows quadratically. Concrete scenarios
  in which Theorem \ref{the:dynAB} applies are, for instance, the
  Hamiltonians in Example \ref{exa:hamiltonian-examples} (a),
  (b),
  (a') and (b'), after dropping the time dependence of $V$.
  Notice that in all these examples, the additional assumptions
  over the electrostatic potential ensure its boundedness from
  below.
\end{remark}

\begin{proof}[Proof of Theorem \ref{the:dynAB}]
Given compact sets $E,F \subset \R^d$ and $T \neq 0$, we are in
the assumptions of Corollary \ref{cor:Corollary-PropComp} or Remark \ref{rem approxPot}, where we
take $s=0$, $t=T$, and we exchange $E$ and $F$. Therefore the
operator $\mathbf{1}_{F} e^{-i T H} \mathbf{1}_E$ is compact.
Then, by standard properties of compact operators, one easily
concludes that
\begin{equation}
\label{eq:loc-prop-op}
  S = \mathbf{1}_{E} e^{i T H} \mathbf{1}_{F} e^{-i T H},
\end{equation}
is compact. Moreover, trivially, since $S$ is the composition of
operators of norm $1$,
\begin{equation}
\label{eq:loc-Prop-ineq}
  \lVert S \rVert_{2 \rightarrow 2} \leq 1.
\end{equation}
We shall prove that inequality \eqref{eq:loc-Prop-ineq} is
strict, and Theorem \ref{the:dynAB} will follow. We proceed by
contradiction, assuming that equality holds in
\eqref{eq:loc-Prop-ineq}.

Define the $1$-eigenspace of $S$
\begin{equation}
  A_{1} = \{ g \in L^2: Sg = g\}.
\end{equation}

We can follow the very same arguments employed in the proof of
Theorem 1.1 in \cite{DanconaFiorletta2026} to prove the following
points.
\begin{enumerate}
\item
$A_{1} \neq \{ 0 \}$. We reproduce the easy proof of this claim
here for the benefit of the reader.

Pick a sequence $g_{j}\in L^{2}$ with $\|g_{j}\|_{L^{2}}=1$ such
that $\|Sg_{j}\|_{L^{2}}\to 1$; up to a subsequence, we can
assume that $g_j$ converges weakly to some $ g \in L^{2}$ with
$\|g\|_{L^{2}}\le1$. By compactness of $S$ this implies that
$Sg_{j}$ has a convergent subsequence $Sg_{j_{k}}\to Sg$. Thus we
have $1=\|Sg\|_{L^{2}}\le\|g\|_{L^{2}}\le1$, so that
$\|Sg\|_{L^{2}}=\|g\|_{L^{2}}=1$. Let
$h=e^{iTH}\mathbf{1}_{F}e^{-iTH}g$; since $Sg=\mathbf{1}_{E}h$
has norm 1 and $\|h\|_{L^{2}}\le1$, we see that $h$ must have
support contained in $E$, so that $Sg=h$ has actually norm 1. It
follows that $\mathbf{1}_{F}e^{-iTH}g$ has also norm 1 and by the
same argument this implies that $e^{-iTH}g$ has support in $F$.
Summing up, we can write $
Sg=\mathbf{1}_{E}e^{iTH}\mathbf{1}_{F}e^{-iTH}g=
e^{iTH}e^{-iTH}g=g$ and we conclude that $g$ is an eigenfunction
of $S$ corresponding to the eigenvalue 1; moreover, $g$ is
supported in $E$ and $e^{-iTH} g$ is supported in $F$. Notice
that, more generally, $g \in A_1$ if and only if $g$ is supported
in $E$ and $e^{- i T H} g$ is supported in $F$.

\item
One can replace $E$ and $F$ with a ball $B_{R}(0)$ of radius $R$
large enough and center $0$, containing both $E$ and $F$, in the
definition \eqref{eq:loc-prop-op}. Again, $A_1 \neq \{ 0 \}$,
since $A_1$ contains at least the $1$-eigenfunction $g$
constructed in the previous point.

Moreover, defining a family of cosine operators $\{ \cos(\tau
\sqrt{H + m^2}) \}_{\tau \in [0,\epsilon]}$, for some fixed
$\epsilon > 0$, one has that $\cos(\tau \sqrt{H + m^2}) A_1
\subset A_1$, thanks to the finite propagation speed property of
such operators; see Proposition 3.1 of
\cite{DanconaFiorletta2026}.

\item
Since $A_1$ is finite dimensional thanks to the compactness of
$S$, one can apply the so called Burnside Theorem on Matrix
Algebras (see Theorem 3.2 in \cite{DanconaFiorletta2026}), in
order to show that there exists $f \neq 0$ in $A_1$, thus
supported in $B_{R}(0)$, such that $\cos(\tau \sqrt{H + m^2}) f =
\lambda(\tau) f$, for some continuous $\lambda : [0 , \epsilon]
\rightarrow \R_{> 0}$.

\item
$f$ has to solve $(H + m^2) f = c f$, for some $c \in \R$. Since
it is compactly supported, it vanishes in a nonempty open set. By
the local unique-continuation property for magnetic
Schr\"{o}dinger operators, for instance in the form proved by
Kurata \cite{Kurata97} or Arrizabalaga--Zubeldia
\cite{ArrizabalagaZubeldia15}, this implies $f \equiv 0$. This is
a contradiction, since $f \neq 0$ by construction.
\end{enumerate}

Thus we have proved that $\lVert S \rVert_{2 \rightarrow 2} < 1$.

Now we can give the proof of the main inequality
\eqref{eq:dynAB}. First of all, we notice that the adjoint
$S^*$ trivially
satisfies
\begin{equation}
\label{eq:adjointIneq}
  \|S^{*}\|_{2 \to 2} < 1.
\end{equation}
The following chain of inequalities holds
\begin{align*}
  \|u(0)\|_{L^{2}}
  &=
  \|e^{-iTH}u(0)\|_{L^{2}}
\\
    &=
    \|e^{-iTH}u(0)\|_{L^{2}(F)}+
    \|e^{-iTH}u(0)\|_{L^{2}(F^c)}
\\ 
    &=
    \|e^{iTH}\mathbf{1}_{F}e^{-iTH}u(0)\|_{L^{2}}+
    \|u(T)\|_{L^{2}(F^c)}
\\
  &\le
  \|e^{iTH}\mathbf{1}_{F}e^{-iTH}\mathbf{1}_{E}u(0)\|_{L^{2}}+
  \|e^{iTH}\mathbf{1}_{F}e^{-iTH}\mathbf{1}_{E^c}u(0)\|_{L^{2}}+
  \|u(T)\|_{L^{2}(F^c)}
\\
  &\le
  \|S^*u(0)\|_{L^{2}}+
  \|u(0)\|_{L^{2}(E^c)}+  \|u(T)\|_{L^{2}(F^c)}
\\
  &\le\|S^*\|_{2 \rightarrow 2}\|u(0)\|_{L^{2}}+
  \|u(0)\|_{L^{2}(E^c)}+  \|u(T)\|_{L^{2}(F^c)}.
\end{align*}
Since $ \|u(t)\|_{L^{2}}=\|u(0)\|_{L^{2}}$ for all $t \in \R$, by
\eqref{eq:adjointIneq}, we get the main inequality
\eqref{eq:dynAB} with a constant
\begin{equation*}
  C=(1-\|S^*\|_{2 \rightarrow 2})^{-1}.
\end{equation*}

\end{proof}

\begin{remark}
\label{rem:PertOsc}
Let $H$ be a Hamiltonian satisfying \ref{def:yajima}, such that
$A = 0$ and $V(x)=\frac{1}{2} |x|^2 + W(x)$, for $W(x)$ bounded
from below satisfying the assumption in Definition
\ref{def:yajima} (b) and the additional condition
\eqref{eq:AddCondV}. By
means of Example \ref{exa:hamiltonian-examples} (b), for any time
$T \neq k \pi$, $k \in \mathbb{Z}$, we are in the assumptions of
Theorem \ref{the:dynAB}, and the dynamical Amrein--Berthier
inequality \eqref{eq:dynAB} holds for $T \neq k \pi$, $k \in
\mathbb{Z}$.

On the other hand, if we replace the additional assumption
\eqref{eq:AddCondV} on $W(x)$ by the requirement that it is
superlinear, in
the sense that
\begin{equation}
\label{eq:supLin1}
  |\partial^{\alpha} W (x) |
  \lesssim \langle x \rangle^{\lambda - |\alpha|},
  \qquad 1 < \lambda < 2
\end{equation}
for any multi-index $\alpha$, and that the Hessian matrix
$\partial^2 W(x)$ satisfies
\begin{equation}
\label{eq:supLin2}
  C_1 \langle x \rangle^{\lambda -2} \operatorname{Id}
  \leq \partial^2 W(x)
  \leq C_2 \langle x \rangle^{\lambda -2} \operatorname{Id},
  \qquad C_1,C_2 > 0,
\end{equation}
we claim that the dynamical Amrein--Berthier inequality
\eqref{eq:dynAB} holds for any $T \neq 0$, thus also when
$T = k \pi$, $k
\in \mathbb{Z} \setminus \{ 0 \}$.

Indeed, it is sufficient to show that the operator $S$ defined in
\eqref{eq:loc-prop-op} is compact for any time $T \neq 0$, and
then apply the same arguments of the proof of Theorem
\ref{the:dynAB}.

But this follows from the fact that the Schr\"{o}dinger
propagator $e^{- i T H}$ can be represented as an OIO with a
smooth kernel (see \cite{Doi2004} and \cite{Yajima1996}); hence,
the localized propagator $\mathbf{1}_{F} e^{- i T H}
\mathbf{1}_{E}$ is Hilbert--Schmidt, and $S$ is compact.
\end{remark}

\begin{remark}
\label{rem:obsControl}
We recall that if inequality \eqref{eq:dynAB} holds for any time
$T > 0$, then as a byproduct the observability inequality
\begin{equation}
\label{eq:obs-ineq}
  \lVert u_0 \rVert^2_{L^2} \lesssim_{E,T,A,V,d}
  \int^{T}_{0} \lVert e^{- i t H} u_0 \rVert_{L^2(E^c)} dt,
\end{equation}
holds for any compact set $E \subset \mathbb{R}^d$; see Theorem
1.9 in \cite{DanconaFiorletta2026}. By the observability
inequality \eqref{eq:obs-ineq} and the standard Hilbert
Uniqueness Method (see \cite{Zuazua2003}), one obtains the
following $L^{\infty}$ controllability property for $e^{-i T H}$:
given any compact set $E \subset \mathbb{R}^d$ and any pair
$u_0,u_T \in L^2(\mathbb{R}^d)$, we can always find $\nu \in
L^2(\mathbb{R}^d \times (0,T))$ such that the unique solution of the Cauchy
problem
\begin{equation}
\label{eq:schr2}
  i \partial_t u = H u + \mathbf{1}_{E^c} \nu,
  \qquad u(0) = u_0,
\end{equation}
reaches the target state $u_T$ at time $T$, i.e. $u(T)=u_{T}$.

In particular, recalling the previous remark \ref{rem:PertOsc},
this applies when $H$ is a Hamiltonian satisfying
\ref{def:yajima}, such that $A = 0$ and
$V(x)=\frac{1}{2} |x|^2 + W(x)$, for real-valued,
smooth perturbations $W$ satisfying \eqref{eq:supLin1}
and \eqref{eq:supLin2}.
\end{remark}

\bibliographystyle{abbrv}
\bibliography{dynamicalUP}

\end{document}